\documentclass[11pt]{amsart}
\usepackage[margin={0.7in}, bmargin={0.7in}, tmargin={0.7in}  ]{geometry}
\usepackage{amsfonts}
\usepackage{amsmath}
\usepackage{epsfig}
\usepackage{color}
\usepackage{graphicx}
\usepackage{psfrag}
\usepackage{eufrak}
\usepackage{subfigure}
\usepackage{geometry}
\usepackage{mathtools}
\usepackage{mhequ}
\usepackage{comment}

\parindent=0.2cm
\numberwithin{equation}{section}

\newcommand{\be}{\begin{equation}}
\newcommand{\ee}{\end{equation}}

\DeclareMathOperator{\Ric}{Ric}
\DeclareMathOperator{\divergence}{div}
\DeclareMathOperator{\trace}{tr}
\DeclareMathOperator{\vol}{Vol}

\DeclareMathOperator{\Ker}{Ker}
\DeclareMathOperator{\Length}{Lenght}

\newcommand{\calA}{{\mathcal A}}

\newcommand{\second}{I\!I}

\numberwithin{equation}{section}

\newtheorem{lemma}{Lemma}[section]

\newtheorem{theorem}[lemma]{Theorem}
\newtheorem{proposition}[lemma]{Proposition}
\newtheorem{corollary}[lemma]{Corollary}
\theoremstyle{remark}
\newtheorem{remark}[lemma]{Remark}
\newtheorem*{conjecture}{Conjecture}

\begin{document}

\title[Eigenvalue bounds for Paneitz with boundary]{Eigenvalue bounds for the Paneitz operator and its associated third-order boundary operator on locally conformally flat manifolds}

\author[M.d.M. Gonz\'alez]{Mar\'ia del Mar Gonz\'alez}

\address{Mar\'ia del Mar Gonz\'alez
\hfill\break\indent
Universidad Aut\'onoma de Madrid
\hfill\break\indent
Departamento de Matem\'aticas, and ICMAT, 28049 Madrid, Spain}
\email{mariamar.gonzalezn@uam.es}

\author[M. S\'aez]{Mariel S\'aez}

\address{Mariel S\'aez
\hfill\break\indent
P. Universidad Cat\'olica de Chile
\hfill\break\indent
Departamento de Matem\'aticas, Santiago, Chile}
\email{mariel@mat.uc.cl}

\maketitle

\begin{abstract}
In this paper we study bounds for the first eigenvalue of the Paneitz operator $P$ and its associated third-order boundary operator $B^3$ (see \eqref{definition P} and \eqref{Paneitz-boundary} for a precise definitions) on four-manifolds. We restrict to orientable, simply connected, locally confomally flat manifolds that have at most two umbilic boundary components.  The proof is based on showing that under the hypotheses of the main theorems, the considered manifolds are confomally equivalent to canonical models. This equivalence is proved by showing the injectivity of suitable developing maps. Then the bounds on the eigenvalues are obtained through explicit computations on the canonical models and its connections with
the classes of manifolds that we are considering. In particular, we give an explicit bound for a 4-dimensional annulus with a radially symmetric metric. The fact that $P$ and $B^3$ are conformal in four dimensions is key in the proof.

\end{abstract}
\section{Introduction}

Let $(M^4,g)$ be a 4-dimensional Riemannian manifold and denote by $\Ric$   and $W$  the Ricci and Weyl tensors of $g$, respectively. Define $J$ to be  the trace of the Schouten tensor $A = \frac{1}{2}(\Ric-Jg)$ (actually $J$ is a multiple of the scalar curvature $R$, this is, $J=\frac{1}{6}R$) and $dv_g$ the volume element for the metric $g$.\\

 The Paneitz operator  $P_g$ on $(M,g)$, first introduced in 1983 \cite{Paneitz83}, is defined by
\begin{equation}\label{definition P}
P_g=(-\Delta_g)^2+\divergence_g\big\{4A_g-2Jg\big\}d,
\end{equation}
$P$ is a conformally covariant operator and, in particular, it satisfies that under a change of metric $g_f=e^{2f}g$,
\begin{equation}\label{conformal-Paneitz}
P_{g_f}=e^{-4f}P_{g} \quad\text{on }M.
\end{equation}
This operator describes the transformation law for Branson's $Q$-curvature \cite{Branson85}, which is defined by
\begin{equation*}
Q_g= \tfrac{1}{6}(-\Delta R_g-3|\Ric_g\!|^2 + R_g^2).
\end{equation*}
Indeed,
\begin{equation*}
P_{g} f+Q_{g_f} e^{4f}=Q_g,\quad\text{for}\quad g_f=e^{2f}g.
\end{equation*}
There is an extensive bibliography on the  $Q$-curvature equation in dimension four. Without being exhaustive, we mention
 \cite{Chang-Yang:extremal,Wei-Xu,Djadli-Malchiodi,Li-Li-Liu,Gursky-Malchiodi}.

Now, if $M$ is a compact 4-dimensional manifold without boundary, the Chern-Gauss-Bonnet formula \cite{Branson:sharp-inequalities} reads
\begin{equation}\label{Gauss-Bonnet}
\int_M Q_g \,dv_g+\frac{1}{4}\int_M |W_g|_g^2\,dv_g=8\pi^2\chi(M).
\end{equation}

Note then that we may regard $P$ as a generalization for 4-manifolds of the Laplace operator $\Delta$ in two dimensions (that is also conformally covariant in that setting) and the curvature $Q$ as a four-dimensional analog of the Gaussian curvature in the two-dimensional setting (which plays the same role as $Q$ in the classical Gauss-Bonnet formula).\\

The study of eigenvalues of differential operators has an extensive history. In the particular case of the Laplacian in two dimensions it is possible to obtain bounds that only depend on the topology of the manifold (see  \cite{Yang-Yau}); more precisely, consider  $N^2$ to be a two-dimensional compact orientable Riemanniann manifold with no boundary and take $\varsigma_1$ to be the first non-zero eigenvalue of the Laplace-Beltrami on $N$. Let
\begin{equation}\label{Lambda}
\Theta(N^2):=\sup\{\varsigma_1  \vol(N^2)\},
\end{equation}
where the $\sup$ is taken among all Riemannian metrics on $N^2$. It is well known that $\Theta(N^2)<\infty$ and
\begin{equation}\label{bound:surfaces}
\Theta(N^2)\leq 8\pi(\gamma+1),
\end{equation}
where $\gamma$ is the genus of the surface. See also \cite{Nadirashvili}  for a discussion of attainability. In fact, in that paper the discussion is  extended to the setting with boundary and a Neumann condition at that boundary.\\

The first goal in our paper is to generalize the bound \eqref{bound:surfaces} for the Paneitz operator $P$ on closed 4-manifolds. However, although it is well known that the spectrum of $P_g$  consists of a sequence of eigenvalues converging to $+\infty$, the principal eigenvalue $\lambda_0^g$ maybe negative. In consequence, one first imposes restrictions that ensure the positivity of the operator.  With  this objective, we recall   two important conformal invariant quantities in four dimensions: Firstly, the \emph{total  $Q$-curvature},
\begin{equation}\label{kappa}
\kappa_g:=\int_M Q_g \,dv_g,
\end{equation}
and, secondly,  the well known \emph{Yamabe invariant}
\begin{equation}\label{Yamabe-invariant-closed}
\mathcal Y[g]=\inf_{g_f=e^{2f}g} \frac{\int_M R_{g_{f}}\,dv_{g_f}}{\left(\int_M dv_{g_f}\right)^{1/2}}.
\end{equation}

A  key theorem by Gursky \cite{Gursky:principal-eigenvalue} yields that, if both the Yamabe invariant $Y[M]$  and the total $Q-$curvature $\kappa_g$ are nonnegative, then $\lambda_0^g=0$ and the kernel of $P_g$ contains only the constant functions. Thus, the next eigenvalue $\lambda_1^g$ is positive. A less restrictive condition was given in \cite{Gursky-Viaklovsky:fully-nonlinear-equation}: indeed, if $M$ is a closed 4-manifold with positive scalar curvature and
 \begin{equation}\label{Gursky-condition}
 \int_M Q_g\,dv_g +\frac{1}{3}(\mathcal Y[g])^2>0,
 \end{equation}
then the same conclusion holds.  It is interesting to observe that \eqref{Gursky-condition}  is a conformally invariant quantity. Now, the first eigenvalue of  $P_g$ (that we denote as $\lambda_1^g$) can be computed through the Rayleigh quotient
\begin{equation}\label{Rayleigh}
\lambda_1^g=\inf_{\int_M u\,dv_g=0,\\ u\ne 0} \,\,\frac{\mathcal E^M_g[u]}{\int_M u^2\,dv_g},
\end{equation}
where
\begin{equation}\label{Eg}
\mathcal E^M_g[u]=\int_{M} (\Delta_g u)^2\,dv_g+\int_M \Big(2Jg_{ab}-4A^g_{ab}\Big)\nabla^a u\nabla^b u\,dv_g.
\end{equation}
This is a conformal invariant quantity in 4-dimensions; indeed, if we have two metrics related by $g_f=e^{2f}g$, then
\begin{equation*}
\mathcal E_{g_f}[u]=\mathcal E_g[u].
\end{equation*}

Our first theorem is a generalization of the bound \eqref{bound:surfaces} for the first eigenvalue of the Paneitz operator:

\begin{theorem}\label{thm:closed} Let $(M,g)$ be a compact, orientable, closed,  locally conformally flat ({\it l.c.f}) Riemannian 4-manifold, and define $\lambda_1^g$ to be the first (non-zero) eigenvalue of the Paneitz operator $P_g$. Then:
\begin{itemize}
\item[\emph{i.}] If $M$ is simply connected, then $M$ is conformally equivalent to $\mathbb S^4 $. In this setting we have $\lambda_1>0$ with $\Ker(P_g)=\{\text{constants}\}$ and
\begin{equation*}
\lambda_1^g \, {\vol(M)}\leq 64\pi^2.
\end{equation*}
Equality holds if and only if $M$ is diffeomorphic to $\mathbb S^4$.

\item[\emph{ii.}]  If $\mathcal Y[g]>0$ and $\kappa_g>0$, and $M$ is orientable, then $M$ is conformally equivalent to $\mathbb S^4$ and the same conclusions  hold.

\item[\emph{iii.}] If $\mathcal Y[g]>0$ and $\kappa_g=0$, then $M$ is conformally equivalent to a quotient  $\mathbb R\times \mathbb S^3$. 
    Then $\lambda_1^g>0$ with $\Ker(P_g)=\{\text{constants}\}$.

 Assume that the fundamental domain is exactly $[0,\varrho) \times\mathbb{S}^3$ for some $\varrho>0$ and let $\Psi: M\to [0,\varrho) \times\mathbb{S}^3$ be the conformal embedding described above. Set $\Psi_{\mathbb{S}^3}$ to be its projection onto the $\mathbb S^3$-coordinates. If, in addition, we impose the
geometric condition that for all $q\in M$, there exist  $\delta_0\ge 0$ and $\varepsilon\in (0,1) $ such that, for every $\delta<\delta_0$ it holds \begin{equation}\label{concentrating}\frac{\vol_M( \mathcal B_\delta(\Psi_{\mathbb{S}^3}(q))}{\vol_M( \mathcal B^c_\delta(\Psi_{\mathbb{S}^3}(q))}<\varepsilon,\end{equation} then
\begin{equation*}
\lambda_1^g{\vol(M)}\leq  C(\varepsilon, \delta_0) \varrho.
\end{equation*}
Here $C(\varepsilon, \delta_0)$ is a constant that only depends on $\varepsilon, \delta_0$, while
$\mathcal B_\delta(\cdot)$ is the geodesic ball on $\mathbb{S}^3$ with the standard metric centered at $\Psi_{\mathbb{S}^3}(q)$ and $\mathcal B^c_\delta(\cdot)$ its complement in $\mathbb{S}^3$. We also denoted $\vol_M(A)=\int_{M\cap \Psi^{-1}(A)} dv_g$.

\end{itemize}
\end{theorem}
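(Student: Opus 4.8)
We outline a proof with a common architecture for all three parts: first identify the conformal model of $(M,g)$, then deduce positivity of $P_g$ from the model, and finally bound $\lambda_1^g\vol(M)$ by evaluating the conformally invariant energy $\mathcal E^M_g$ of \eqref{Eg} on test functions pulled back from the model. The model is produced through the developing map. Since $(M,g)$ is $l.c.f.$, the universal cover $\widetilde M$ carries a conformal immersion $\Phi\colon\widetilde M\to\mathbb S^4$. When $M$ is simply connected, Kuiper's theorem says $\Phi$ is a conformal diffeomorphism onto $\mathbb S^4$, which is the first claim of (i). When $\mathcal Y[g]>0$ one instead invokes the Schoen--Yau analysis of $l.c.f.$ manifolds of positive scalar curvature: $\Phi$ is injective, $M=\Omega/\Gamma$ with $\Omega\subset\mathbb S^4$ open and $\Gamma$ a Kleinian group whose limit set has Hausdorff dimension at most $(4-2)/2=1$. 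Because $W_g\equiv 0$, the formula \eqref{Gauss-Bonnet} degenerates to $\kappa_g=8\pi^2\chi(M)$, so $\kappa_g>0$ means $\chi(M)>0$ and $\kappa_g=0$ means $\chi(M)=0$. A nonelementary $\Gamma$ contains a nonabelian free subgroup and so yields finite covers of $M$ with negative Euler characteristic; together with the Schoen--Yau structure theory and orientability this leaves only: $\chi(M)>0\Rightarrow M$ conformally $\mathbb S^4$ (orientability excludes $\mathbb{RP}^4$), and $\chi(M)=0\Rightarrow M$ conformally a quotient $(\mathbb R\times\mathbb S^3,\,dt^2+g_{\mathbb S^3})/\langle(t,x)\mapsto(t+\varrho,Rx)\rangle$, $R\in SO(4)$, with $\Psi\colon M\to[0,\varrho)\times\mathbb S^3$ the resulting conformal embedding. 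On each of these models the conformal invariants $\mathcal Y$ and $\kappa$ have the sign required by Gursky's theorem \cite{Gursky:principal-eigenvalue}, hence $\lambda_0^g=0$, $\Ker P_g=\{\text{const}\}$ and $\lambda_1^g>0$ in all three cases.

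For the bound in (i)--(ii), run the classical Hersch--Li--Yau balancing. Let $\Phi\colon(M,g)\to\mathbb S^4$ be the conformal diffeomorphism and let $x_1,\dots,x_5$ be the coordinate functions of $\mathbb S^4\subset\mathbb R^5$. A topological degree argument gives a conformal automorphism $\gamma$ of $\mathbb S^4$ with $\int_M (x_i\circ\gamma\circ\Phi)\,dv_g=0$ for all $i$; set $u_i:=x_i\circ\gamma\circ\Phi$. Using the $u_i$ in \eqref{Rayleigh}, summing, and using $\sum_i x_i^2\equiv1$, we get $\lambda_1^g\vol(M)\le\sum_i\mathcal E^M_g[u_i]$. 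Since $\mathcal E$ is invariant under conformal changes of metric and under pullback by diffeomorphisms, and $\gamma$ is a conformal automorphism of the round sphere, $\mathcal E^M_g[u_i]=\mathcal E^{\mathbb S^4}_{g_{\mathbb S^4}}[x_i\circ\gamma]=\mathcal E^{\mathbb S^4}_{g_{\mathbb S^4}}[x_i]$. On $(\mathbb S^4,g_{\mathbb S^4})$ one has $4A_g-2Jg=-2g$, hence $P_{g_{\mathbb S^4}}=(-\Delta)^2+2(-\Delta)$, whose first nonzero eigenvalue, attained on the $x_i$ with $-\Delta x_i=4x_i$, equals $16+8=24$. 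Therefore $\mathcal E^{\mathbb S^4}_{g_{\mathbb S^4}}[x_i]=24\int_{\mathbb S^4}x_i^2\,dv_{g_{\mathbb S^4}}$ and $\sum_i\mathcal E^{\mathbb S^4}_{g_{\mathbb S^4}}[x_i]=24\,\vol(\mathbb S^4)=64\pi^2$, which is the stated inequality. If equality holds, each mean-zero $u_i$ realizes the infimum in \eqref{Rayleigh}, so $P_g u_i=\lambda_1^g u_i$; then the conformal covariance \eqref{conformal-Paneitz} of $P$ together with $\sum_i u_i^2\equiv1$ forces the conformal factor relating $g$ to $g_{\mathbb S^4}$ to be constant, so $(M,g)$ is a round sphere.

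For (iii), use test functions depending only on the $\mathbb S^3$-factor of the model. Let $\mu:=(\Psi_{\mathbb S^3})_*(dv_g)$, a measure on $\mathbb S^3$ of total mass $\vol(M)$ which for $A\subset\mathbb S^3$ equals the quantity $\vol_M(A)$ in \eqref{concentrating}. Given $\psi\in C^\infty(\mathbb S^3)$ invariant under $R$ (so $\psi\circ\Psi_{\mathbb S^3}$ descends to $M$), put $u:=\psi\circ\Psi_{\mathbb S^3}-c$ with $c$ chosen so $\int_M u\,dv_g=0$. By conformal and diffeomorphism invariance, $\mathcal E^M_g[u]$ equals the energy of the function $(t,x)\mapsto\psi(x)$ on the product cylinder; there the tensor $4A-2Jg$ has vanishing $\mathbb S^3$-block while the gradient of $(t,x)\mapsto\psi(x)$ has no $\partial_t$-component, so the first-order term in \eqref{Eg} drops out and $\mathcal E^M_g[u]=\varrho\int_{\mathbb S^3}(\Delta_{\mathbb S^3}\psi)^2\,dv_{\mathbb S^3}$, a quantity blind to the conformal factor. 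On the other hand $\int_M u^2\,dv_g=\mathrm{Var}_\mu(\psi):=\int_{\mathbb S^3}\psi^2\,d\mu-\vol(M)^{-1}(\int_{\mathbb S^3}\psi\,d\mu)^2$. Let $\psi$ range over a fixed finite family of $R$-invariant low-degree spherical harmonics, namely the coordinate functions $x_i$ of $\mathbb S^3\subset\mathbb R^4$ when $R=\mathrm{id}$ (for which $-\Delta_{\mathbb S^3}x_i=3x_i$): then $\sum_i\mathrm{Var}_\mu(x_i)=\vol(M)(1-|b|^2)$ with $b$ the normalized barycenter of $\mu$, a point of the closed unit ball of $\mathbb R^4$. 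If $b\ne 0$ and $|b|=1-s$, set $p:=b/|b|\in\mathbb S^3$; then $\int_{\mathbb S^3}|x-p|^2\,d\mu=2s\,\vol(M)$, so Chebyshev gives $\mu(\mathcal B^c_\tau(p))\le C\tau^{-2}s\,\vol(M)$, whereas \eqref{concentrating} applied at a point of $M$ over $p$ gives $\mu(\mathcal B^c_{\delta_0/2}(p))>(1+\varepsilon)^{-1}\vol(M)$; comparing forces $s\ge\eta_0(\varepsilon,\delta_0)>0$, hence $1-|b|^2\ge\eta_0$ and some $\mathrm{Var}_\mu(x_i)\ge\tfrac14\eta_0\,\vol(M)$ (when $b=0$ this holds trivially). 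Plugging that $x_i$ into \eqref{Rayleigh} yields $\lambda_1^g\vol(M)\le \tfrac{9}{4}\vol(\mathbb S^3)\,(\tfrac14\eta_0)^{-1}\,\varrho=:C(\varepsilon,\delta_0)\,\varrho$. When $R\ne\mathrm{id}$ one replaces the $x_i$ by a fixed family of $R$-invariant spherical harmonics and runs the same variance/barycenter estimate.

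The main obstacle is the first step: proving injectivity of the developing map and pinning down the conformal model. For simply connected $M$ this is Kuiper's theorem, but under the weaker hypotheses of (ii)--(iii) it rests on the Schoen--Yau estimate for the Hausdorff dimension of $\mathbb S^4\setminus\Phi(\widetilde M)$ together with the structure theory of the resulting Kleinian group (the "injectivity of suitable developing maps" of the abstract). A secondary difficulty, specific to (iii), is converting the qualitative non-concentration hypothesis \eqref{concentrating} into the effective lower bound $\mathrm{Var}_\mu(x_i)\gtrsim\eta_0\vol(M)$ on the denominator of the Rayleigh quotient; the barycenter of $\mu$ is the natural device for this, and it is what links the geometric hypothesis to the spectral bound.
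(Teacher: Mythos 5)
Your proposal reaches the same conclusions as the paper, and parts \emph{i.} and \emph{ii.} run essentially along the paper's lines (Kuiper's theorem / Gursky's classification plus a Hersch balancing for the coordinate functions of $\mathbb S^4$, with the energy $\mathcal E^{\mathbb S^4}_{g_{\mathbb S^4}}[x_i]=24\int x_i^2$ and $\sum_i\mathcal E=24\vol(\mathbb S^4)=64\pi^2$; you also supply the standard rigidity argument for the equality case, which the paper glosses over).

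Part \emph{iii.}, however, is a genuinely different argument, and arguably a cleaner one. The paper's strategy is a cylinder version of Hersch's conformal balancing: it applies a M\"obius dilation $\varphi_{p,\delta}$ of $\mathbb S^3$ to the projected coordinates $x_i\circ\Psi_{\mathbb S^3}$ so as to center the mass (obtaining $\int_M x_i\circ\varphi_{p,\delta}\circ\Psi_{\mathbb S^3}\,dv_g=0$), and then computes the energy of these test functions explicitly; since $\varphi_{p,\delta}$ is not a conformal automorphism of the cylinder, the energy is no longer a fixed number but blows up like $\varrho/\delta$ as $\delta\to0$, and the condition \eqref{concentrating} is invoked to force a lower bound $\delta\gtrsim 1-\varepsilon$. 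You sidestep the M\"obius calibration entirely: rather than twisting the test functions to have vanishing mean, you subtract the constant that centers them (so no $\varphi_{p,\delta}$ is needed), at the price that $\sum_i\int_M u_i^2$ becomes $\vol(M)(1-|b|^2)$ with $b$ the barycenter of the pushforward measure, instead of $\vol(M)$. The energy computation then simplifies drastically: since the $\mathbb S^3$-block of $2Jg-4A$ on the product cylinder vanishes, $\mathcal E^M_g[u_i]=\tfrac94\varrho\vol(\mathbb S^3)$ exactly, with no $\delta$-dependence. The quantitative input from \eqref{concentrating} enters via Chebyshev applied to $\int|x-p|^2\,d\mu=2(1-|b|)\vol(M)$, giving $1-|b|^2\ge\eta_0(\varepsilon,\delta_0)$. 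This replaces the paper's explicit (and rather laborious) M\"obius energy computation by a two-line variance bound, and it also avoids the topological-degree argument the paper needs to guarantee the existence of a calibrating M\"obius map. Both approaches use \eqref{concentrating} to exclude the same degenerate scenario (mass concentrating near a point of $\mathbb S^3$), just expressed as ``$\delta$ bounded below'' versus ``$|b|$ bounded away from $1$.'' One place where you are a bit cavalier is the remark about $R\ne\mathrm{id}$: a generic $R\in SO(4)$ has no invariant degree-one spherical harmonics, so ``replace the $x_i$ by $R$-invariant harmonics'' would require more work; but this is moot, since the theorem's hypothesis that the fundamental domain is exactly $[0,\varrho)\times\mathbb S^3$ pins down $R=\mathrm{id}$, which is the case you actually treat.
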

Two remarks regarding statement \emph{iii.} are in order:
\begin{itemize}
    \item    The geometric condition \eqref{concentrating} can be understood as a quantitative measure that avoids concentration around  lines.
\item The exact value of the constant $C(\alpha, \delta_0)$ can be calculated precisely, but it is cumbersome and does not provide additional information. It is worth noting, though, that it blows up if
certain parameter $\delta$ approaches 0, but this is ruled out by condition \eqref{concentrating}.

\end{itemize}

The jump from dimension 2 to dimension 4 is completely non-trivial, since in the two-dimensional case one can use conformal invariance to map any manifold to (a cover of) the sphere. In dimension 4 the difficulty is to find such conformal immersion of $M$ into a model manifold. Thus we restrict our study to locally conformally flat (l.c.f.) manifolds, where the {\it developing map} plays the role of this immersion. In fact, we will show that in the setting of Theorem \ref{thm:closed} the developing map is injective.

The idea of the proof of Theorem \ref{thm:closed} follows  Hersch' original idea in \cite{Hersch}. Indeed,  one uses a calibration type argument in order to show that coordinate functions are good test functions for the Rayleigh quotient \ref{Rayleigh}. In the case that $M$ is conformally equivalent to the sphere, this calculation has also been performed in \cite{PerezAyala}. The case of the cylinder $\mathbb R\times\mathbb S^3$ is much trickier but a variation of Hersch calibration argument can  still be performed.

Related to this result is is the work of \cite{PerezAyala}, where the author shows that for the extremal metric for the quantity $\lambda_1^g\vol(M)$ one may obtain an orthonormal basis of eigenfunctions. In addition, these eigenfunctions are the coordinates for a Paneitz map, which is a 4-th order generalization of a harmonic map.  \\

Finally, note that different bounds  for $\lambda_1^g$ have been introduced in the literature. For instance, if $M$ can be conformally immersed into a unit sphere $\mathbb S^K$, then \cite{Xu-Yang:conformal-energy} gives a bound
in terms of  an $K$-conformal energy inspired in the conformal volume of
Li-Yau \cite{Li-Yau}. In addition, \cite{Chen-Li:first-eigenvalue,Cheng:eigenvalues} showed some geometric bounds provided that  $M$ is a compact submanifold of $\mathbb R^K$.  A comparison theorem for this first eigenvalue was given in \cite{Wang-Zhou:comparison} for dimension $K\geq 5$ in some settings.\\

\subsection{Manifolds with boundary}\label{section:introduction-boundary}

Now we turn our attention to the boundary case.  If $N^2$ is a compact surface with boundary, one may ask the same questions for the Steklov eigenvalues, which are the eigenvalues $\vartheta $ of the following the boundary value problem
\begin{equation*}
\left\{\begin{split}
&\Delta_g u=0 \text{ in }N,\\
&-\partial_\eta u=\vartheta u \text{ on }\partial N.
\end{split}\right.
\end{equation*}
A good reference for this problem is \cite{GP}. Given $N$, is well known that there exists an increasing sequence of eigenvalues $0=\vartheta_0<\vartheta_1\leq \vartheta_2\leq \ldots$
and, moreover,
\begin{equation*}
\vartheta_1=\inf_{\int_{\partial N}u=0,u\neq 0} \frac{\int_N |\nabla u|^2\,dv}{\int_{\partial N} u^2\,d\sigma}.
\end{equation*}
The extremal problem for the Steklov eigenvalue analogous to \eqref{Lambda} has been studied in a series of papers by Fraser-Schoen \cite{Fraser-Schoen:annulus,Fraser-Schoen:survey,Fraser-Schoen:free-boundary-minimal-surfaces}. If $N^2$ is a surface of genus $\gamma$ and $k$ boundary components, they show the bound
\begin{equation}\label{Steklov}
\vartheta_1(N) \Length(\partial N)\leq 2\pi( \gamma+k).
\end{equation}
For $\gamma = 0$ and $k = 1$ this result was obtained by Weinstock \cite{Weinstock} and it is sharp, while if the boundary has two boundary components (i.e., an annulus), it is not attained.
 In addition, Weinstock  showed that the bound is attained at a flat disk and the eigenfunctions can be identified with its coordinates.  In the general case,
  Fraser-Schoen~\cite{Fraser-Schoen:survey,Fraser-Schoen:free-boundary-minimal-surfaces}  identified the eigenfunctions associated to maximal eigenvalues (with a given topology and number of boundary components) with coordinate functions of free boundary minimal surfaces in the unit ball $\mathbb B^K$.
 In the particular case that $N$ is homeomorphic to the annulus, in \cite{Fraser-Schoen:annulus} and \cite{Fraser-Schoen:free-boundary-minimal-surfaces}  it is shown that the quantity $\vartheta_1(N) \Length(\partial N)$  is maximized  by the coordinate functions of a critical  catenoid (in $\mathbb R^3$) which meets the boundary sphere orthogonally.
This problem has also been studied in the higher dimensional setting \cite{Fraser-Schoen:higher-dimension}, where conformal invariance is lost and the maximizer does not exist in the class of smooth metrics.\\

In this paper we are interested in the analog question for a conformal third-order boundary operator associated to the Paneitz operator, and which yields the natural 4-dimensional generalization of the Steklov problem from the conformal geometry point of view. In addition, it contains strong topological information thanks to the Chern-Gauss-Bonnet formula given in formula \eqref{CGB-full} (and the discussion above it).
It  was introduced in \cite{Chang-Qing:zeta1,Chang-Qing:zeta2} (see also the surveys: \cite{Chang:survey,Chang:survey-CRM}, for instance), and fully generalized in \cite{Case:boundary-operators}. We follow the presentation in the latter.\\

Set $(M^4,g)$ be a 4-dimensional compact Riemannian manifold with boundary $\Sigma=\partial M$. We keep the notation above for the interior quantities, while tilde will mean the corresponding quantities for the boundary metric. Denote by $h$ the restriction of the metric $g$ to $T\Sigma$ and by $d\sigma_h$ the volume form for  $h$ on $\Sigma$. Let $\eta$ be the outward-pointing normal, $\second=\nabla\eta|_{T\Sigma}$ the second fundamental form,  $H = \trace_h \nabla\eta$  the mean curvature of $\Sigma$, and $\second_0=\second-\frac{H}{3}h$ the trace free part of the second fundamental form.\\

We set, on the boundary $\Sigma$:
\begin{equation*}
\begin{split}
&B_g^1  u= \eta u,\\
&B_g^2 u= -\tilde\Delta u+D^2 u (\eta,\eta)+\frac{1}{3}H\eta u,
\end{split}
\end{equation*}
and the third order operator
\begin{equation}\label{Paneitz-boundary}
B_g^{3} u =-\eta\Delta u-2\tilde \Delta \eta u + 2\langle \second_0,\tilde D^2 u\rangle -\frac{2}{3}H\tilde \Delta u+\frac{2}{3}\langle\tilde\nabla H,\tilde \nabla u\rangle+\Big(-\frac{1}{3}H^2-2A(\eta,\eta)+2\tilde J+\frac{1}{2}|\second_0|^2\Big)\eta u,
\end{equation}
 These operators also satisfy a conformally covariance property coupled with \eqref{conformal-Paneitz}, this is
\begin{equation}\label{covariance}
B^k_{g_f} =e^{-kf}B^k_g\quad \text{on }\Sigma,\quad k=1,2,3.
\end{equation}
Define the bilinear form
\begin{equation*}
\mathcal Q_g(u_1,u_2)=\int_M u_1 P_g u_2\,dv_g+\oint_\Sigma \left(u_1 B_g^3(u_2)+B_g^1(u_1)B^2_g(u_2)\right)\,d\sigma_h
\end{equation*}
for $u_1,u_2\in\mathcal C^{\infty}(M)$. The main theorem in \cite{Case:boundary-operators} shows that $\mathcal Q_g$ is symmetric. The corresponding energy functional
\begin{equation*}\label{energy}
\mathcal E[u]=\mathcal Q_g(u,u)
\end{equation*}
 is a conformal invariant. Indeed,
\begin{equation}\label{conformal-invariance}
\mathcal E_{g_f}[u]=\mathcal E_g[u].
\end{equation}

The boundary operator $B_g^3$ operator is associated to the following curvature quantity
\begin{equation}\label{T}
T_g=\eta J-\frac{2}{3}\tilde \Delta H-2\langle \second_0,\tilde A \rangle+\frac{4}{3}H\tilde J+\frac{1}{3}H|\second_0|^2-\frac{2}{27}H^3.
\end{equation}
For a conformal metric $g_f=e^{2f}g$, the $T$-curvature equation is
\begin{equation*}
B^3_{g} f+T_g=T_{g_f} e^{3f}.
\end{equation*}
In addition, the integral quantity
\begin{equation*}
\kappa_{g,h}:=\int_M Q_g\,dv_g+\int_\Sigma T_g\,d\sigma_h
\end{equation*}
is a conformal invariant. \\

It is well known that the mean curvature is the associated boundary curvature to the scalar curvature on $M$, and that the pair $(R,H)$ is conformally covariant. From the PDE point of view, these arise from a boundary value problem for the conformal Laplacian (see \eqref{LN} below). If one considers instead fourth-order equations on manifolds with boundary, the couple $(Q,T)$ is the natural generalization of the pair $(R,H)$, and has been well studied: for the construction of constant $Q$-curvature metrics with vanishing $T$-curvature, see \cite{Nidiaye:constant-Q}, while the constant $T$-curvature problem was considered in \cite{Ndiaye:constant-T}. A $Q$-curvature flow on manifolds with boundary was analyzed in \cite{Ndiaye:flow}. In the particular case of  $(\mathbb B^4, \mathbb S^3)$ sharp Sobolev trace inequalities for the curvature $T$ were proved in
\cite{Ache-Chang}.\\

In addition, the pair $(Q,T)$ controls topology in the 4-dimensional setting. More precisely, there is a Chern-Gauss-Bonnet formula analogous to \eqref{Gauss-Bonnet} for 4-manifolds with boundary \cite{Chang-Qing:zeta1}:
\begin{equation}\label{CGB-full}
8\pi^2\chi(M)=\int_M \Big(\frac{|W|_g^2}{4}+Q_g\Big)\,dv_g+\int_{\Sigma} \Big(T_g-\frac{2}{3}\trace \second_0^3\Big)\,d\sigma_h.
\end{equation}
If $M$ is a l.c.f. manifold with umbilic boundary, this formula greatly simplifies:
\begin{equation}\label{CGB}
8\pi^2\chi(M)=\int_M Q_g\,dv_g+\int_{\Sigma} T_g\,d\sigma_h.
\end{equation}

Our first result in the boundary case is a classification statement based on the injectivity of the developing map $\Phi:M\to\mathbb S^4$ for a l.c.f. manifold, thus partially generalizing the seminal work by Schoen-Yau \cite{Schoen-Yau:paper}, \cite[Chapter VI]{Schoen-Yau:libro} to manifolds with boundary. We observe that the umbilicity assumption in the Theorem is a natural one, since it is a conformal invariant property.

We denote by  $\mathcal Y[g]$ Yamabe invariant for manifolds with boundary (see  \eqref{Y-invariant} for its precise expression). It is the natural generalization of \eqref{Yamabe-invariant-closed}, thus with a slight abuse of notation we  denote it by the same letter.

\begin{theorem}\label{thm:positivity}
Let $M$ be a compact,  orientable, l.c.f. Riemannian 4-manifold with umbilical boundary $\Sigma=\partial M$.
 Then,
 \begin{itemize}
 \item[a.]
  If $M$ is simply connected and $\Sigma$ has one connected component, then $M$ is conformally equivalent to a half-sphere
  $$\mathbb S^4_+=\{(z_0,\ldots,z_4)\in\mathbb R^{5}\,:\, |z|=1,z_0\geq 0\}.$$

\item[b.] If $M$ is not necessarily simply connected, but $\chi(M)=1$ and $\mathcal Y[g]>0$, then the same conclusion holds \cite{Raulot}.

\item[c.] \label{2boundaries} Assume that $M$ is simply connected, $\Sigma$ has exactly two connected components, $R_g > 0$ and $Q_g>0$. Then $M$ is conformally equivalent to an annulus in $\mathbb R^4$, that can be chosen as
    $$\mathcal A_\rho:=\{x\in\mathbb R^{4}\,:\,\rho\leq|x|\leq 1\} \quad\text{for some }\rho\in(0,1).$$
\end{itemize}
\end{theorem}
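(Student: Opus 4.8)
The plan is to double $M$ across its umbilic boundary, apply the closed-manifold Theorem~\ref{thm:closed} to the double $\hat M$, and then recover $M$ as the quotient of the resulting canonical model by the reflection involution.

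Since all the operators in play are conformally covariant, one may replace $g$ by any conformal metric. As $\Sigma$ is umbilic, choosing $f$ with $\partial_\eta f=-\tfrac13H_g$ on $\Sigma$ yields a conformal metric with $H=0$, and then $\second\equiv0$, so $\Sigma$ is totally geodesic. The double $\hat M:=M\cup_\Sigma M$ is then a $C^2$ Riemannian manifold whose Weyl tensor vanishes off $\Sigma$ (because $M$ is l.c.f.) and hence everywhere by continuity; so $\hat M$ is a closed, orientable, smooth l.c.f.\ 4-manifold (the flat conformal charts being compatible by Liouville's theorem), the involution $\tau:\hat M\to\hat M$ exchanging the two copies is conformal with $\mathrm{Fix}(\tau)=\Sigma$ and $\hat M/\tau=M$, and since $\chi(\Sigma)=0$ one has $\chi(\hat M)=2\chi(M)$ and, by \eqref{CGB} applied to $M$, $\kappa_{\hat g}=2\kappa_{g,h}=16\pi^2\chi(M)$.

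For part~a, van Kampen gives $\pi_1(\hat M)=\pi_1(M)\ast_{\pi_1(\Sigma)}\pi_1(M)=1$, so Theorem~\ref{thm:closed}(i) produces a conformal diffeomorphism $F:\hat M\to\mathbb S^4$. Then $F\tau F^{-1}$ is a M\"obius involution of $\mathbb S^4$ whose fixed set $F(\Sigma)$ is a compact connected totally umbilic hypersurface, hence a round $\mathbb S^3$; the only such involution, up to conjugation, is the reflection across it, and $F$ maps one copy of $M$ conformally onto the closure of one of the two round half-spheres it bounds, so $M$ is conformally $\overline{\mathbb S^4_+}$. Part~b is due to Raulot \cite{Raulot}; alternatively, the same argument applies once one knows $\mathcal Y[\hat g]>0$: by the solution of the boundary Yamabe problem, $\mathcal Y[g]>0$ gives a conformal representative with $R>0$ and $H=0$ (hence totally geodesic boundary), whose double has $R>0$ and thus $\mathcal Y[\hat g]>0$; then $\kappa_{\hat g}=16\pi^2\chi(M)=16\pi^2>0$, so Theorem~\ref{thm:closed}(ii) gives $\hat M\cong\mathbb S^4$ and one concludes as in part~a.

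Part~c is the substantive case. First, the hypotheses $R_g>0$ and $Q_g>0$ should be used to secure $\mathcal Y[\hat g]>0$ (again pass to a conformal representative with $R>0$ and totally geodesic boundary, and double). Next, since $\pi_1(M)=1$ while $\partial M=\Sigma_1\sqcup\Sigma_2$ is disconnected, a map $\hat M\to S^1$ — run across one copy of $M$ from $\Sigma_1$ to $\Sigma_2$ and back through the other — induces a surjection of $\pi_1(\hat M)$ onto $\mathbb Z$, so $b_1(\hat M)\ge1$ and in particular $\hat M\not\cong\mathbb S^4$. On the other hand, for a simply connected compact oriented 4-manifold with two boundary components, Lefschetz duality and the long exact sequence of $(M,\partial M)$ give $H_0(M)=\mathbb Z$, $H_1(M)=0$, $H_3(M)\cong\mathbb Z$, $H_4(M)=0$, whence $\chi(M)=b_2(M)\ge0$ and $\kappa_{\hat g}=16\pi^2\chi(M)\ge0$. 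Since $\kappa_{\hat g}>0$ would force $\hat M\cong\mathbb S^4$ by Theorem~\ref{thm:closed}(ii), contradicting $b_1(\hat M)\ge1$, we get $\chi(M)=0$ and $\kappa_{\hat g}=0$; Theorem~\ref{thm:closed}(iii) then identifies $\hat M$ conformally with a compact quotient $(\mathbb R\times\mathbb S^3)/\Gamma$ of the round cylinder, and combining this with $\hat M/\tau=M$ and $\pi_1(M)=1$ forces $\Gamma\cong\mathbb Z$ acting by translations, $\hat M\cong\mathbb S^1\times\mathbb S^3$, and $\tau$ conjugate to $(\theta,\omega)\mapsto(-\theta,\omega)$ on $(\mathbb R/L\mathbb Z)\times\mathbb S^3$ with fixed spheres $\{0\}\times\mathbb S^3$ and $\{L/2\}\times\mathbb S^3$. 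Hence $M=\hat M/\tau\cong[0,L/2]\times\mathbb S^3$, which is conformal to $\mathcal A_\rho$ with $\rho=e^{-L/2}$. The main obstacle I expect is this positivity step in part~c — deriving $\mathcal Y[\hat g]>0$ from the pointwise curvature bounds and controlling the mild ($C^{1,1}$) loss of regularity at the seam; a secondary, routine point throughout is the classification of conformal involutions of $\mathbb S^4$ and of $\mathbb S^1\times\mathbb S^3$ with prescribed totally umbilic fixed hypersurface, which is what allows the descent from $\hat M$ to $M$. An alternative that avoids doubling is to adapt the Schoen--Yau argument directly to $M$, proving injectivity of the developing map $\Phi:M\to\mathbb S^4$ via estimates near the umbilic boundary components; there the difficulty becomes precisely those boundary estimates.
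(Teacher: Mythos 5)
For parts a and b your strategy agrees in essence with the paper's: double across the umbilic boundary and identify the closed double $\hat M$ with $\mathbb S^4$, then recover $M$ as a hemisphere. The paper (following Spiegel) extends the developing map of $M$ oddly to $\hat M$ and argues directly that the extension is a diffeomorphism, whereas you invoke Theorem~\ref{thm:closed}(i) and then classify the involution; these are two dressings of the same doubling argument.

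For part c you take a genuinely different route from the paper. The paper keeps Escobar's conformal representative, doubles, applies the Schoen--Yau theorem to obtain a developing map $\Phi:\hat M\to\mathbb S^4$, and then --- crucially --- uses the local Chang--Hang--Yang argument (this is precisely where $Q_g>0$ is used) to show $\Phi|_M$ has no branching points, so that $\Phi|_M$ is an injective conformal immersion, after which a short case analysis of the images of the two umbilic boundary spheres gives an annulus. You instead feed $\hat M$ through Theorem~\ref{thm:closed}(ii)--(iii): you use the surjection $\pi_1(\hat M)\to\mathbb Z$ to rule out $\hat M\cong\mathbb S^4$, deduce $\chi(M)=0$ and $\kappa_{\hat g}=0$, obtain $\hat M$ conformal to a compact quotient of $\mathbb R\times\mathbb S^3$, and then want to classify the involution $\tau$. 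This is conceptually appealing, but as written it has a real gap precisely at the step you call ``secondary, routine''. Once $\hat M\cong(\mathbb R\times\mathbb S^3)/\Gamma$, the generator of $\Gamma\cong\mathbb Z$ is a priori of the form $\gamma(t,\omega)=(t+L,A\omega)$ with $A\in SO(4)$ possibly nontrivial, and a lift of $\tau$ is of the form $\tilde\tau(t,\omega)=(2c-t,B\omega)$ with $B^2=\mathrm{id}$. One must actually carry out the following: (i) the fixed locus of $\tau$ in $\hat M$ is $\{[(c-kL/2,\omega)]: A^{-k}B\omega=\omega\}$, and for both fixed components to be entire slices $\{t=c\}\times\mathbb S^3$ and $\{t=c-L/2\}\times\mathbb S^3$ one needs $\mathrm{Fix}(B)=\mathbb S^3$ and $\mathrm{Fix}(A^{-1}B)=\mathbb S^3$, hence $B=\mathrm{id}$ and $A=\mathrm{id}$; (ii) the normalization condition $\tilde\tau\gamma\tilde\tau^{-1}\in\Gamma$ yields $BAB^{-1}=A^{-1}$ and must be checked to be consistent with (i); and (iii) one must know in the first place that the two components of $\mathrm{Fix}(\tau)$ are slices $\{t=c_i\}\times\mathbb S^3$ rather than round three-spheres bounding a ball in the cylinder --- this is exactly the content of the paper's three-case analysis of the images of $\Sigma_1,\Sigma_2$, and it is not automatic. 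You state the conclusions of (i)--(iii) without an argument; this computation is the actual mathematical content of the descent from $\hat M$ to $M$, and until it is supplied part c is not a proof.

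One further point, which you flag yourself: both your argument and the paper's need Escobar's representative on $M$ to have $R>0$ (equivalently $\mathcal Y[g]>0$), which does not follow immediately from the pointwise hypothesis $R_g>0$ since the boundary Yamabe functional also involves $\int_\Sigma H u^2$. This positivity issue is shared by the two approaches, so it is not a differential weakness of yours, but it does need to be addressed. Finally, your Euler-characteristic computation $\chi(M)=b_2(M)\ge0$ via Lefschetz duality and the pair long exact sequence is correct, and the Chern--Gauss--Bonnet bookkeeping $\kappa_{\hat g}=2\kappa_{g,h}=16\pi^2\chi(M)$ using $T\equiv0$ for Escobar's metric is also correct.
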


 We remark that statement a. of the previous theorem follows from a classical doubling argument and it has been already studied in the literature. On the other hand,
  statement b. above was proved by Raulot \cite{Raulot} and we include it here for completeness. Thus our main contribution is statement c. for the annulus case, which is  partly inspired in the work of Chang, Hang and Yang \cite{Chang-Hang-Yang} for closed manifolds of positive $Q$-curvature.

 Part a. may be understood as a 4-dimensional version of the classical Riemann mapping theorem in the plane. For the multiply connected case, part c. tells us that we cannot map two double-connected regions $M$ and $M'$  unless they share the same $\rho$. This is a very similar behavior to what happens in two dimensions, since two ring regions in the plane can only be mapped to one another unless they have the same \emph{extremal distance} or \emph{conformal modulus}, which is a conformal invariant quantity. This notion goes back to Ahlfors \cite{Ahlfors:book} (see also, for instance, the more modern exposition of \cite{Lawler}).\\

The proof of Theorem \ref{thm:positivity} also relies in the study of the developing map. A conformally invariant quantity, that will be relevant in analyzing this developing map was defined by Escobar and it is the analogue of the Yamabe invariant for manifolds with boundary.   This invariant is crucial in the so-called Yamabe problem with boundary, which seeks a conformal metric on $M$ to a given one that has constant scalar curvature and zero mean curvature on the boundary. The Yamabe problem with boundary was solved in many cases by Escobar in \cite{Escobar:Yamabe-with-boundary} (in particular, in dimension four which suffices for our purposes). Related work can be found in \cite{Cherrier:problemes, Han-Li:Yamabe-with-boundary,Ambrosetti-Li-Malchiodi,Mayer-Ndiaye:remaining-cases,Brendle-Chen} from the variational point of view, and \cite{Brendle:Yamabe-flow,Almaraz-Sun:flow} for flow-type methods.

Note in addition that  since the right hand side of \eqref{CGB} is conformally invariant, it is convenient to take Escobar's solution as a background metric in $M$ and in this particular case, $T\equiv 0$.\\

Our final goal in this paper is to understand the properties and eigenvalues  of the third-order boundary operator $B_g^3$. To this operator we need to associate a second boundary condition, so we will work on the class of functions
\begin{equation*}
\begin{split}
\mathcal U_0=\{u:\overline M\to\mathbb R \,:\, u \text{ smooth}, \,\partial_\eta u=0\text{ on }\Sigma\}.
\end{split}
\end{equation*}
In this class the energy functional reduces to
\begin{equation}\label{energy1}
\begin{split}
\mathcal E_g^M[u]
&=
\int_{M} (\Delta_g u)^2\,dv_g+\int_M \Big(2Jg_{ab}-4A^g_{ab}\Big)\nabla^a u\nabla^b u\,dv_g+\frac{2}{3}\int_\Sigma H|\tilde\nabla u|_h^2\,d\sigma_h-2\int_\Sigma (\second_0)_{ij}\tilde\nabla^i u\tilde\nabla^j u\,d\sigma_h.
\end{split}
\end{equation}
Thus we would like to study the boundary eigenvalue problem
\begin{align}
P_gu &=0 \hbox{ in } M, \label{bi-laplacian-eq}\\
B^1_g u&=0  \hbox{ on } \Sigma, \label{boundary-condition}\\
B^3_g u&= \lambda u  \hbox{ on } \Sigma. \label{eigenvalue-problem}
\end{align}
It is possible to show that there exists an increasing sequence of eigenvalues
\begin{equation*}
\lambda_0^g\leq \lambda_1^g\leq \lambda_2^g\leq\ldots
\end{equation*}

A straightforward calculation from the models yields a statement about the positivity of $B^3_g$:

\begin{corollary}\label{cor:positivity}
In all cases a., b., c. in Theorem \ref{thm:positivity} above we have $\lambda_0^g=0$ and the corresponding eigenspace consists only of constant functions.
\end{corollary}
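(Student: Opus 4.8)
The plan is to reduce everything to an explicit computation on the canonical models furnished by Theorem~\ref{thm:positivity}, using the conformal invariance of the energy $\mathcal E_g^M$ on the class $\mathcal U_0$. First note that constants lie in $\mathcal U_0$ and solve \eqref{bi-laplacian-eq}--\eqref{eigenvalue-problem} with $\lambda=0$: the operator $P_g$ annihilates constants by \eqref{definition P}, $B^1_g$ does so trivially, and every term in \eqref{Paneitz-boundary} carries at least one derivative of $u$, so $B^3_g(\mathrm{const})=0$. Hence $0$ is an eigenvalue and $\lambda_0^g\le 0$. It therefore suffices to prove that $\mathcal E_g^M[u]\ge 0$ for every $u\in\mathcal U_0$, with equality only when $u$ is constant: the variational characterization $\lambda_0^g=\inf\{\mathcal E_g^M[u]/\int_\Sigma u^2\,d\sigma_h : u\in\mathcal U_0,\ u|_\Sigma\not\equiv 0\}$ then yields $\lambda_0^g=0$, while any $\lambda_0$-eigenfunction $u$ satisfies $\mathcal E_g^M[u]=\mathcal Q_g(u,u)=0$ and hence is constant. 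Finally, by \eqref{conformal-invariance} the functional $\mathcal E_g^M$ is unchanged under $g\mapsto e^{2f}g$, and since $B^1_{e^{2f}g}=e^{-f}B^1_g$ the class $\mathcal U_0$ is itself conformally invariant; so the nonnegativity of $\mathcal E_g^M$ on $\mathcal U_0$ and the description of its kernel may be checked on any convenient representative of each conformal class.

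For cases a.\ and b.\ the representative is the round half-sphere $\mathbb S^4_+$. There $\Ric=3g$, so $J=2$ and $A=\tfrac12 g$, whence $2Jg_{ab}-4A^g_{ab}=2g_{ab}$; the equatorial boundary $\mathbb S^3$ is totally geodesic, so $H=0$ and $\second_0=0$, and both boundary integrals in \eqref{energy1} vanish. Thus
\[
\mathcal E_g^M[u]=\int_{\mathbb S^4_+}(\Delta u)^2\,dv_g+2\int_{\mathbb S^4_+}|\nabla u|^2\,dv_g\ \ge\ 0,
\]
with equality if and only if $\nabla u\equiv 0$, i.e.\ $u$ is constant.

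For case c.\ the flat representative $\mathcal A_\rho\subset\mathbb R^4$ is inconvenient: at the inner sphere $\{|x|=\rho\}$ the outward mean curvature is negative, so the term $\tfrac23\int_\Sigma H|\tilde\nabla u|_h^2$ in \eqref{energy1} is indefinite. Instead one passes to the conformally equivalent finite cylinder $[0,L]\times\mathbb S^3$ with product metric $dt^2+g_{\mathbb S^3}$, where $L=\log(1/\rho)$ and the map $x\mapsto(\log|x|,\,x/|x|)$ gives $\delta_{\mathbb R^4}=e^{2t}(dt^2+g_{\mathbb S^3})$. On this cylinder the boundary consists of two totally geodesic copies of $\mathbb S^3$, so the boundary terms in \eqref{energy1} again vanish, and from $\Ric=2g_{\mathbb S^3}$, $J=1$, $A=-\tfrac12\,dt^2+\tfrac12 g_{\mathbb S^3}$ one computes $2Jg_{ab}-4A^g_{ab}=4\,(dt)_a(dt)_b$. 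Hence
\[
\mathcal E_g^M[u]=\int_{[0,L]\times\mathbb S^3}(\Delta u)^2\,dv_g+4\int_{[0,L]\times\mathbb S^3}(\partial_t u)^2\,dv_g\ \ge\ 0,
\]
with equality if and only if $\partial_t u\equiv 0$ and $\Delta u\equiv 0$; the first condition gives $u=u(\theta)$ and then $0=\Delta u=\Delta_{\mathbb S^3}u$ forces $u$ constant.

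In all three cases $\mathcal E_g^M\ge 0$ on $\mathcal U_0$ with kernel exactly the constants; carrying this back to $(M,g)$ by conformal invariance and combining with the first reduction gives $\lambda_0^g=0$ with eigenspace equal to the constants. The main obstacle is precisely the choice of conformal gauge in part c.: the argument closes only on the cylindrical (equivalently, the $T\equiv 0$) representative, since on the flat annulus the boundary contribution has an unfavorable sign on the inner component; once the right model is chosen, the remaining content is the elementary fact that a sum of two nonnegative $L^2$-integrals vanishes only for constant $u$, together with the conformal invariance of $\mathcal E_g^M$ and of the class $\mathcal U_0$.
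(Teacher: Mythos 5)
Your proposal is correct, and it reaches the conclusion by a route that differs from the paper's in a useful way. The paper establishes the half-sphere case both by explicit eigenfunction computation on the ball (Lemma~\ref{lemma:eigenvalues-ball}) and by an integration-by-parts argument on $\mathbb R^4_+$; for the annulus it relies entirely on the explicit spherical-harmonic eigenvalue computation of Lemma~\ref{lemma:quadratic-eq} and Corollary~\ref{cor:annulus model}, and then extracts positivity and the description of the kernel from the explicit formulas for $\lambda_\ell^\pm$ and $u_0^+$. You instead argue directly from the quadratic form: constants are null vectors so $\lambda_0^g\le 0$, and then it suffices to show $\mathcal E_g^M[u]\ge 0$ on $\mathcal U_0$ with equality only for constants, a property you may verify in any convenient conformal gauge by \eqref{conformal-invariance} and the conformal invariance of $\mathcal U_0$. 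The genuinely new ingredient is the choice of gauge in case~c.: on the flat annulus $\mathcal A_\rho$ the term $\tfrac23\int_\Sigma H|\tilde\nabla u|^2$ in \eqref{energy1} has the wrong sign on the inner sphere, so the flat representative is useless for a direct sign argument; passing instead to the product cylinder $[0,L]\times\mathbb S^3$ makes both boundary components totally geodesic and reduces the energy to $\int(\Delta u)^2+4\int(\partial_t u)^2$, whose positivity and kernel are transparent. Your computations of $J$, $A$, and $2Jg-4A$ on both $\mathbb S^4_+$ and the cylinder are correct (with the paper's convention $A=\tfrac12(\Ric-Jg)$, giving $2Jg-4A=2g$ on the round sphere and $4\,dt\otimes dt$ on the cylinder). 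What your approach buys is a uniform, gauge-invariant proof of nonnegativity that bypasses the spherical-harmonic ODE analysis and clarifies \emph{why} the constants are the only null directions; what the paper's approach buys instead is the explicit spectral data (the values $\lambda_\ell^\pm$) that is needed later in the sharpness discussion of Theorem~\ref{thm:bounds-boundary}.b and Proposition~\ref{prop: first eigenvalue}.
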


This implies, in particular, that $\lambda_1^g>0$ may be characterized by the following Rayleigh-type quotient:
\begin{equation}\label{Rayleigh-quotient1}
\lambda_1^g=\min_{\mathcal U_0\,:\,\int_\Sigma u=0}\frac{\mathcal E_g^M[u]}{\displaystyle\int_\Sigma u^2\,d\sigma_h}.
\end{equation}
The question of positivity of $B^3_g$ has also been analyzed in other contexts, see for example the work in \cite{Case-Chang}.\\

Now we look at the min-max problem for $\lambda_1^g$. Our main Theorem is the four-dimensional generalization of \eqref{Steklov}, which may be applied to manifolds satisfying the hypothesis of Theorem \ref{thm:positivity}:

\begin{theorem} \label{thm:bounds-boundary}
We get the following bounds for $\lambda_1^g>0$:
\begin{itemize}
\item[\emph{a.}] If $M$ is conformally equivalent to a half-sphere $\mathbb S^4_+$,
    \begin{equation*}
    \lambda_1^g \vol(\Sigma)\leq 24\pi^2.
    \end{equation*}
    and it is attained at a flat disk.
\item[\emph{b.}] If $M$ is conformally equivalent to an annulus $\mathcal A_\rho$ (with boundaries $\Sigma_1$, $\Sigma_ \rho$),
\begin{equation}\label{statement-annulus}
    \lambda_1^g \vol(\Sigma)\leq c\Big(\rho,\frac{\vol(\Sigma_\rho)}{\vol(\Sigma_1)}\Big),
    \end{equation}
    where this is a constant can be explicitly computed.

    In addition, there is $\rho^*>0 $ such that for $\rho\leq \rho^*$ the bound is sharp.
\end{itemize}
\end{theorem}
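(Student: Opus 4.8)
The plan is to combine the Rayleigh characterisation \eqref{Rayleigh-quotient1} of $\lambda_1^g$ (legitimate since, by Corollary~\ref{cor:positivity}, $\lambda_0^g=0$ with eigenspace the constants) with the min--max principle: for every two-dimensional $W\subset\mathcal U_0$ one has $\lambda_1^g\le\max_{0\ne u\in W}\mathcal E_g^M[u]\big/\!\int_\Sigma u^2\,d\sigma_h$. Two structural facts drive everything. First, $\mathcal E$ is conformally invariant by \eqref{conformal-invariance} and natural under diffeomorphisms, so all energies can be evaluated on the canonical models furnished by Theorem~\ref{thm:positivity}. Second, $\mathcal Q_g(1,u)=0$ for every $u$ (an immediate consequence of $P_g1=B_g^2 1=B_g^3 1=0$ together with the symmetry of $\mathcal Q_g$), so that $\mathcal E_g^M[u+c]=\mathcal E_g^M[u]$ for constants $c$ and test functions may be freely translated to have mean zero on $\Sigma$ without changing their energy.

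For part a, I would represent $M$ by the flat ball $(\mathbb B^4,|dx|^2)$, which is conformal to $\mathbb S^4_+$. The relevant functions are $u_i=x_i\big(1-\tfrac13|x|^2\big)$, $i=1,\dots,4$: each is biharmonic on $\mathbb B^4$, satisfies $B^1u_i=\partial_ru_i=0$ on $\mathbb S^3$, and a direct computation from \eqref{Paneitz-boundary} gives $B^3u_i=12\,u_i$. Separating $\mathcal U_0\cap\Ker P$ into spherical harmonics on $\mathbb S^3$ and solving the radial ODE in each sector shows that $\ell=0$ contributes only constants, $\ell=1$ is spanned by the $u_i$ with eigenvalue $12$, and the sectors $\ell\ge2$ contribute eigenvalues $\ge12$; hence $\lambda_1(\mathbb B^4)=12$ and $\lambda_1\vol(\mathbb S^3)=24\pi^2$ for the flat disk. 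For a general $g$ in the conformal class, I transport the $u_i$ by a M\"obius automorphism $\Phi$ of $\mathbb B^4$: because $B^1$ is conformally covariant and $\Phi$ fixes $\mathbb S^3$ setwise, $u_i\circ\Phi\in\mathcal U_0$; because $\sum_iu_i^2=\tfrac49$ on $\mathbb S^3$ and $\Phi|_{\mathbb S^3}$ maps $\mathbb S^3$ into itself, $\sum_i(u_i\circ\Phi)^2=\tfrac49$ pointwise on $\Sigma$; and by Hersch's conformal-balancing argument \cite{Hersch} applied to the measure $d\sigma_h$ induces on $\mathbb S^3$, $\Phi$ can be chosen so that $\int_\Sigma(u_i\circ\Phi)\,d\sigma_h=0$ for all $i$. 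Taking $w_i=\tfrac32(u_i\circ\Phi)$, so that $\sum_iw_i^2\equiv1$ on $\Sigma$, summing $\lambda_1^g\int_\Sigma w_i^2\,d\sigma_h\le\mathcal E_g^M[w_i]$ over $i$, and using the conformal/diffeomorphism invariance of $\mathcal E$ to replace $\mathcal E_g^M[w_i]$ by $\tfrac94\,\mathcal E^{\mathbb B^4}_{|dx|^2}[u_i]=6\pi^2$, yields $\lambda_1^g\vol(\Sigma)\le\sum_i\mathcal E_g^M[w_i]=24\pi^2$, with equality for the flat disk by the eigenfunction computation.

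Turning to part b, I would represent $M$ by the flat annulus $\mathcal A_\rho=\{\rho\le|x|\le1\}$. The radial biharmonic functions are $\psi=c_1+c_2|x|^{-2}+c_3|x|^2+c_4\log|x|$, and the Neumann conditions $\psi'(\rho)=\psi'(1)=0$ single out, modulo constants and scaling, one function $\psi_0=|x|^2-\rho^2|x|^{-2}-2(1+\rho^2)\log|x|\in\mathcal U_0$; since $\psi_0$ is constant on each boundary sphere the boundary terms in \eqref{energy1} vanish, so $\mathcal E^{\mathcal A_\rho}_{|dx|^2}[\psi_0]=\int_{\mathcal A_\rho}(\Delta\psi_0)^2\,dv$, an explicit function of $\rho$. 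Applying the min--max bound to $W=\mathrm{span}\{1,\psi_0\}$ and writing $a=\vol(\Sigma_1)$, $b=\vol(\Sigma_\rho)$, $p=\psi_0|_{\Sigma_1}$, $q=\psi_0|_{\Sigma_\rho}$, one finds $\int_\Sigma\psi_0^2\,d\sigma_h-\big(\int_\Sigma\psi_0\,d\sigma_h\big)^2/\vol(\Sigma)=ab(p-q)^2/(a+b)$, so that, evaluating the numerator on the flat model,
\begin{equation*}
\lambda_1^g\,\vol(\Sigma)\ \le\ \frac{(a+b)^2}{ab}\cdot\frac{\mathcal E^{\mathcal A_\rho}_{|dx|^2}[\psi_0]}{(p-q)^2}\ =:\ c\Big(\rho,\tfrac{\vol(\Sigma_\rho)}{\vol(\Sigma_1)}\Big),
\end{equation*}
the first factor depending only on $b/a$ and the second only on $\rho$; note that since $\Ker P_g=\Ker P_{|dx|^2}$ in the conformal class, $W$ is exactly the radial part of $\mathcal U_0\cap\Ker P_g$, so this upper bound is precisely (the first radial eigenvalue)$\times\vol(\Sigma)$. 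For the sharpness, I would exhibit for $\rho$ small a radially symmetric metric on $\mathcal A_\rho$ realising equality: for such a metric \eqref{bi-laplacian-eq}--\eqref{eigenvalue-problem} separates in spherical harmonics, the radial sector's first eigenfunction lies in $W$, and one checks that for $\rho\le\rho^*$ the lowest radial eigenvalue stays below the lowest $\ell=1$ eigenvalue, so that it is genuinely $\lambda_1^g$ and the displayed inequality is an equality.

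The main obstacle is this sharpness in part b: constructing the radially symmetric metric for which the radial mode wins the competition against the $\ell=1$ modes --- this competition is exactly what produces the threshold $\rho^*$ --- and verifying that the min--max estimate is tight for it. In part a the only non-routine inputs are the identity $B^3u_i=12\,u_i$ and the ODE analysis giving $\lambda_1(\mathbb B^4)=12$; the legitimacy of the Hersch balancing reduces to the conformal covariance of $B^1$ together with the invariance of $\mathbb S^3$ under M\"obius automorphisms of $\mathbb B^4$, which is immediate.
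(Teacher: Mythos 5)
Your proof takes essentially the same approach as the paper: the functions $u_i=x_i(1-\tfrac13|x|^2)$ are, up to the scaling by $3/2$, exactly the $\ell=1$ eigenfunctions $u_1(-\log r)Y_1^m$ of Lemma~\ref{lemma:eigenvalues-ball} written in Cartesian rather than cylindrical coordinates, and the Hersch balancing via the Poincar\'e extension of a M\"obius automorphism of $\mathbb S^3$ is precisely the paper's calibration; in part~b your radial Neumann biharmonic $\psi_0$ spans (together with constants) the same two-dimensional space as the $u_0^+$ of Lemma~\ref{lemma:quadratic-eq}, so your explicit $c(\rho,b/a)$ coincides with the paper's bound. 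Be aware, though, that the sharpness claim — here as in the paper — only establishes $\lambda_0^+\le\lambda_1^-$ for $\tau\ge\tau^*$ (Proposition~\ref{prop: first eigenvalue}); that $\lambda_0^+$ is actually the first non-zero eigenvalue for the model metric still requires comparing against the $\ell\ge2$ sectors, which the paper leaves as a (numerically supported) conjecture.
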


The bounds of the previous theorem are obtained by comparison with explicit computations in two types of model manifolds: a 4-dimensional ball and  4-dimensional annuli (see Section \ref{section:calculations}). The computations in the ball model are a   are straightforward and provide optimal bounds. On the other hand, the precise  calculations for the annuli are based on the ideas in
 \cite{Fraser-Schoen:annulus}, and although elementary, they soon become quite technical.\\

Finally, we make some bibliographical remarks on related eigenvalue problems. For a general introduction to boundary value problems for fourth-order operators we refer to the monograph \cite{Gazzola-Grunau-Sweers}. Many versions of (fourth-order) eigenvalue problems in which the eigenvalue appears in the boundary condition have appeared in the literature \cite{Buoso-Provenzano:biharmonic-Steklov,Bucur-Ferrerro-Gazzola,Liu:Weyl,Gazzola-Sweers,Bucur-Gazzola,Knopf-Liu}. These are known as biharmonic Steklov eigenvalue problems. One particular application of this is to study suitable boundary conditions for the Cahn-Hilliard equation. This is a model that describes phase separation
processes of binary mixtures by a non-linear fourth-order equation. In recent years, several types of dynamic
boundary conditions have been proposed in order to account for the
interactions of the material with the solid wall and, in particular, third order boundary conditions play an essential role in the model \cite{Liu-Wu:Cahn-Hilliard,Knopf-Lam-Liu-Metzger}.\\

The organization of the paper is as follows: In Section \ref{section:closed} we discuss the eigenvalue problem for the Paneitz operator on closed manifolds and we prove Theorem \ref{thm:closed}. In Section \ref{section:Escobar} we discuss preliminary notions that are necessary  in the proof
the classification Theorem  \ref{thm:positivity}, that is  finally proved in Section \ref {section:injectivity}. Particular geometric models are analyzed in more detail in Section \ref{section:calculations} and in the Appendix. We finally prove  Corollary \ref{cor:positivity} and Theorem \ref{thm:bounds-boundary}
 in Section \ref{sec: boundary value}.\\

\noindent{\it Acknowledgments: The authors would like to thank Jeffrey Case, Alice Chang, Gaven Martin, Vicente Mun\~{o}z and Riccardo Piergallini, for many useful discussions and suggestions.

The authors would like to also thank the anonymous referee that pointed out a gap in the first version of this manuscript and led to improvements of our work.}



\section{The closed case}\label{section:closed}

In this section we give the proof of Theorem \ref{thm:closed}, which we summarize here: first, the positive curvature assumption allow us to control the topology of $M$ and, either $M$ is conformally equivalent to a sphere $\mathbb S^4$, or $M$ is covered by a cylinder $\mathbb R\times \mathbb S^3$. In the first case, we can obtain an upper bound for $\lambda_1$ using the scheme in Yang-Yau \cite{Yang-Yau} , which is based on a trick by Hersch \cite{Hersch}.  Hersch's idea is to use the coordinate functions of the embedding as test functions in the Rayleigh quotient \eqref{Rayleigh}. A modification of this strategy yields the cylinder case too.\\

We recall now some facts about locally conformally flat (l.c.f.) manifolds; for additional  background, we refer to the book \cite[Chapter VI]{Schoen-Yau:libro}.  A Riemannian metric $g$ on a smooth manifold $M$ is called l.c.f. if for every point $p\in M$, there exists a neighbourhood $U$ of
$p$ and a smooth function $f$ on $U$  such that the metric $e^{2f}g$ is flat on $U$.
Note that, in dimension $4$, a Riemannian manifold is locally conformally flat if and only if the Weyl tensor $W$ vanishes.\\

We assume, to start with, that $M$ is a simply connected, closed, compact, l.c.f. manifold of dimension $n$. Liouville's theorem \cite[Theorem 1.6 in Chapter VI]{Schoen-Yau:libro} allows us to patch all these neighborhoods to obtain a globally defined conformal immersion $\Phi:M\to \mathbb R^n$ (or equivalently, $\Phi:M\to \mathbb S^n$ by stereographic projection), such that  the locally conformally flat structure of $M$ is induced by $\Phi$. The function  $\Phi$  is called the \emph{developing map} and it is unique up to conformal transformations of $\mathbb S^n$.\\

Note that a simple topological argument yields the well known characterization result by Kuiper \cite{Kuiper} (see also the notes \cite{Howard} for remarks on regularity). Indeed, $\Phi(M)$ is at the same time open and closed in $\mathbb S^n$. More precisely:

\begin{theorem}[Kuiper \cite{Kuiper}]
Any $n$-dimensional closed simply-connected locally conformally flat manifold is conformally equivalent to $\mathbb S^n$.
\end{theorem}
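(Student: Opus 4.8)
The plan is to exploit the fact that the developing map $\Phi: M \to \mathbb{S}^n$ is a local diffeomorphism (an immersion between manifolds of the same dimension, hence open), and that $M$ is compact and connected, while $\mathbb{S}^n$ is connected. First I would observe that since $\Phi$ is a local diffeomorphism and $M$ is compact, $\Phi$ is a proper map onto its image, and in fact $\Phi(M)$ is both open (as the image of an open map) and closed (as the continuous image of a compact set inside the Hausdorff space $\mathbb{S}^n$) in $\mathbb{S}^n$; by connectedness of $\mathbb{S}^n$ this forces $\Phi(M) = \mathbb{S}^n$. Thus $\Phi$ is a surjective local diffeomorphism from a compact manifold, which makes it a covering map of some finite degree $d \geq 1$.

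Next I would invoke simple connectedness of $M$: a connected covering space of the simply connected base $\mathbb{S}^n$ (note $n \geq 2$ here, so $\mathbb{S}^n$ is simply connected) must be trivial, i.e. the covering has degree $d = 1$, so $\Phi$ is a diffeomorphism. Since $\Phi$ was constructed to be a conformal immersion inducing the l.c.f.\ structure, it is in fact a conformal diffeomorphism $M \to \mathbb{S}^n$, which is exactly the assertion that $M$ is conformally equivalent to $\mathbb{S}^n$.

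The one point requiring a little care — and the step I expect to be the main obstacle — is justifying that $\Phi$ is genuinely a covering map rather than merely a surjective local diffeomorphism; this is where compactness of $M$ is essential (a surjective local diffeomorphism from a non-compact manifold need not be a covering). The standard argument: given $q \in \mathbb{S}^n$, the fiber $\Phi^{-1}(q)$ is a discrete closed subset of the compact $M$, hence finite, say $\{p_1, \dots, p_k\}$; choose disjoint open neighborhoods $V_i \ni p_i$ on which $\Phi$ restricts to a diffeomorphism onto an open set, and then show there is an evenly-covered neighborhood $W$ of $q$ contained in $\bigcap_i \Phi(V_i)$ and disjoint from the (closed, hence compact, hence with closed image) set $\Phi(M \setminus \bigcup_i V_i)$. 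Once this is in place the rest is formal. I would also remark, following Kuiper, that the only topological input beyond compactness is $\pi_1(M) = 0$ together with $\pi_1(\mathbb{S}^n) = 0$; the conformal (l.c.f.) hypothesis enters solely through the existence of the developing map $\Phi$ furnished by Liouville's theorem, as recalled above.
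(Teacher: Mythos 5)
Your argument is exactly the one the paper has in mind: the paper sketches it in a single line ("$\Phi(M)$ is at the same time open and closed in $\mathbb S^n$"), and you have correctly filled in the remaining steps — that a surjective local diffeomorphism from a compact manifold is a finite covering, and that simple connectedness of $\mathbb S^n$ (for $n\ge 2$) forces the covering degree to be one. One small clarification worth making: the covering-space step uses simple connectedness of the \emph{target} $\mathbb S^n$; simple connectedness of $M$ is what you already used to obtain the globally defined developing map from Liouville's theorem.
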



\begin{proof}[Proof of Theorem \ref{thm:closed}.i.]
By our previous discussion, there is a bijective conformal embedding $\Phi: (M^4,  g)\to (\mathbb{S}^4, g_{\mathbb{S}^4})$, where $g_{\mathbb S^4}$ is the canonical metric on the sphere. We denote by $(z_0,z_1,\ldots,z_4)$ the coordinates of $\mathbb S^4$ in $\mathbb R^5$ and by $\Phi_i$ the $i$-th coordinate of the embedding $\Phi$, $i=0,1,2,3,4$.\\

In this setting, we must have $\lambda_1^g>0$ with $\Ker(P_g)=\{\text{constants}\}$ since condition \eqref{Gursky-condition} is trivially satisfied on the sphere.\\

Let us check now that $\Phi_i$ is an admissible function for the Rayleigh quotient \eqref{Rayleigh}.
A standard calibration argument (see Lemma 1.1 in \cite{Hersch} or page 107 in \cite{Gromov}) yields that we can choose the embedding satisfying
 \begin{equation}\label{calibration}
\int_M \Phi_i\,dv_g=0,\quad i=0,\ldots,4.
\end{equation}
Moreover,
 \begin{equation}\label{test-function}
 \lambda_1^g \int_M \Phi_i^2\,dv_g\leq  \mathcal E^M_g[\Phi_i]=  \mathcal E^{\Phi(M)}_{ g_{\mathbb{S}^4}}[z_i],
 \end{equation}
Adding on $i$ we have
$$ \lambda_1^g\vol(M)\leq \sum_{i=0}^4 \mathcal E^{\Phi(M)}_{ g_{\mathbb{S}^4}}[z_i],$$
here we have used that $\sum_{i=0}^4\Phi_i^2=1$. Now recall that $\Phi:M\to \mathbb S^4$ is  bijection and calculate, from the expression of the energy \eqref{Eg},
$$\mathcal E^{\mathbb{S}^4}_{g_{\mathbb{S}^4}}[z_i]=\mu_1^2\int_{\mathbb{S}^4} z_i^2\,dv_{g_{\mathbb{S}^4}}+ 2\int_{\mathbb{S}^4} |\nabla z_i|^2 \,dv_{\mathbb{S}^4}=
\big(\mu_1^2+2\mu_1\big)\int_{\mathbb{S}^4} z_i^2\,dv_{g_{\mathbb{S}^4}}.$$
Here $\mu_1=4$ is the first non-zero eigenvalue of the (minus) Laplace-Beltrami operator on $\mathbb{S}^4$.
 Thus we conclude
$$\sum \mathcal E^{\Phi(M)}_{ g_{\mathbb{S}^4}}[z_i]=  8\vol(\mathbb{S}^4)=64 \pi^2,$$
and that this bound is sharp, since the coordinate functions are already eigenfunctions. This completes the proof.
\end{proof}

We next consider the non-simply connected case; in the l.c.f. 4-dimensional setting it turns out that positive curvature gives information about the topology. Since the Weyl term vanishes for l.c.f manifolds, under the assumption $\kappa_g\geq 0$,  the Gauss-Bonnet formula  \eqref{Gauss-Bonnet}   implies that $\chi(M)\geq 0$.
The classification of such manifolds according to the Euler characteristic was studied by Gursky in  \cite[Theorem A]{Gursky:lcf-4-6}:  if $M$ is a compact 4- or 6-dimensional manifold which admits a l.c.f. of non-negative scalar curvature $g$, then $\chi(M) \leq 2$. Furthermore, $\chi(M) = 2$ if and only if $(M,g)$ is conformally equivalent to the sphere with its canonical metric, and $\chi(M) = 1$ if and only if $(M,g)$ is conformally equivalent to projective space with its canonical metric.
The remaining case $\chi(M)=0$ was characterized in \cite[Corollary G]{Gursky:Weyl}: if $(M,g)$ is a compact, l.c.f. 4-manifold with $Y[g]>0$ and $\chi(M) = 0$, then $(M,g)$ is conformal to a quotient of the cylinder $\mathbb R\times \mathbb S^3$.\\

A related result was proven by Chang, Hang and Yang \cite[Corollary 1.2]{Chang-Hang-Yang}. More precisely, if $M$ has positive scalar curvature and positive $Q$-curvature, then $M$ is conformally equivalent to a quotient of the sphere. Note that if we remove the positive $Q$-curvature assumption one may construct examples of manifolds that are conformally equivalent to $\mathbb S^4\setminus\{p_1,\ldots,p_N\}$ (see \cite[Theorem 1.3]{Chang-Hang-Yang} and the discussion there).

As a side remark, closed, flat manifolds are isometric to $\mathbb R^n / \Gamma$, for $\Gamma$ a Bieberbach group.
A short overview on Bieberbach manifolds can be found  in \cite[Section 4.1]{Bettiol-Piccione}.
\\

\begin{proof}[Proof of Theorem \ref{thm:closed}.ii.]
It follows as part \emph{i.} taking into account that $M$ is orientable.
\end{proof}

Now we deal with the remaining case in which $M$ is (conformally) covered by a cylinder $\mathbb R\times \mathbb S^3$. These manifolds have been studied in \cite{Hillman:cylinder}, \cite[Chapter 11]{Hillman:book}. It is known that a closed 4-manifold $M$ is covered by $\mathbb R\times\mathbb S^3$ if and only if $\pi_1 = \pi_1(M)$ has two ends and $\chi(M) = 0$. Its homotopy type is then determined by $\pi_1$ and the first nonzero $k$-invariant $k(M)$. While  all the possible subgroups  of $\pi_1( \mathbb R\times\mathbb S^3) $ are well known,  there is not a complete classification of which manifolds can be actually realized with such fundamental groups (see also \cite{Hamilton} for examples of quotients with positive curvature). In any case, we assume that  the fundamental domain $\Omega:=\Phi(M)$ is exactly a region $[0,\varrho)\times \mathbb S^3$ of $\mathbb R\times\mathbb S^3$ for some $\varrho>0$, with periodicity in the real variable.
\begin{proof}[Proof of Theorem \ref{thm:closed}.iii.]
Note first that condition \eqref{Gursky-condition} for positivity of $\lambda_1^g$  is trivially satisfied by our hypothesis.

To construct suitable test functions we consider the coordinates on the sphere $\mathbb S^3$ and use a variation of Hersch's calibration method in \cite{Hersch}. More precisely, if $(t, y_1, y_2, y_3, y_4)\in [0,\varrho)\times\mathbb S^3$ we take $(y_1, y_2, y_3, y_4)\in \mathbb{S}^3$, and apply a Moebius transformation $\varphi_{p,\delta}$ of $\mathbb S^3$ which is induced by dilations on the tangent plane at a point $p\in\mathbb S^3$. We briefly sketch Hersch's argument to show that by his procedure we can  find  $p\in \mathbb{S}^3$ and $\delta\in (0,1]$ such that
\begin{equation}\label{calib}\int_M x_i\circ \varphi_{p,\delta}\circ \Psi_{\mathbb{S}^3} \,dv_g=0 \quad\text{for}\quad i\in\{1,2, 3, 4\}. \end{equation}
Here $x_i$ is the $i$-th coordinate function on $\mathbb{S}^3$ and $\Psi_{\mathbb{S}^3}$ the projection onto $\mathbb{S}^3$ of the  conformal embedding of $M$ into $[0,\varrho) \times \mathbb S^3$. Let
$$F_i(p,\delta)= \int_M x_i\circ \varphi_{p,\delta}\circ \Psi_{\mathbb{S}^3}\, dv_g$$ and $F=(F_1, F_2, F_3, F_4)\in \mathbb{R}^4 $.
Note first that   $F\to p \vol(M)$ as $\delta\to 0$ and, in particular, the surface $F(\cdot,\delta)$ tends a sphere of radius $\vol(M)$ that does not touch the origin. On the other hand,  $F(p,1)$ is a fixed point independent of $p$, while for any given $\delta\in (0,1)$, $F(\cdot,\delta)$  is an immersed 3-dimensional surface on $\mathbb{R}^4$ that is continuous in $\delta$. If $F(p,1)$ is 0 we already have the desired transformation, otherwise, a continuity argument in $\delta$ and $p$ implies that there are $p\in \mathbb{S}^3$ and $\delta\in (0,1)$ such that $F(p,\delta)$ agrees with the origin, which is precisely \eqref{calib}. Note, in addition, that
$$\sum_{i=1}^4(x_i\circ\varphi_{p,\delta}\circ \Psi_{\mathbb{S}^3})^2=1,$$
which yields
$$ \lambda_1^g\vol(M)\leq \sum_{i=1}^4 \mathcal E^{\Phi(M)}_{ g_{\mathbb R\times\mathbb{S}^3}}[x_i\circ\varphi_{p,\delta}],$$
here we  used that the energy is a conformal invariant. To complete the proof we explicitly compute the energy of these transformations and maximize our computation in $\delta\in [0,1]$. By symmetry, it is enough to consider the Moebius  transformations with $p=S$ the  the South pole, which are given by  $$\varphi_{p, \delta}(\hat{y}, y_4)=\left( \frac{2 \delta\hat{y}}{(1-y_4)+\delta^2 (1+y_4)}, \frac{y_4-1+\delta^2 (1+y_4)}{(1-y_4)+\delta^2 (1+y_4)} \right),$$
where we write $\hat y=(y_1,y_2,y_3)$.
Then, denoting by $\varphi^i$ the $i$-th coordinate of $\varphi_{p, \delta}(\hat{y}, y_4)$ (or equivalently, $x_i\circ \varphi_{p, \delta}$) and $f(y_4, \delta)= \frac{1}{(1-y_4)+\delta^2 (1+y_4)}$, the energy is
  given by
\begin{equation*}
\begin{split}\sum_{i=1}^4 E^{\Phi(M)}_{ g_{[0,\varrho)\times\mathbb{S}^3}}[x_i\circ\varphi_{p,\delta}]=\int_{[0,\rho)\times \mathbb S^3}
4\delta^2 f^2 |\hat{y}|^2 \big[&3+5y_4(1-\delta^2)f-2 (1-\delta^2)^2(1-y_4^2)f^2]^2\\
&+ 16 \delta ^4 f^4[3y_4- 2(1-y_4^2)(1-\delta^2) f\big]^2\, d\mu_M.
\end{split}
\end{equation*}
To obtain a uniform bound in $\delta$ for this energy, we parametrize $\mathbb S^3$ by $(\sin \phi \, \, \omega, \cos \phi)$, where $\omega\in \mathbb S^2$ and $\phi\in[0,\pi)$. Then the volume element is given by $\sin^2\phi \, \mu_{\mathbb{S}^2}$, where $\mu_{\mathbb{S}^2}$ is the volume element of $\mathbb{S}^2$,
\begin{align*}
\sum_{i=1}^4 E^{\Phi(M)}_{ g_{[0,\varrho)\times\mathbb{S}^3}}[x_i\circ\varphi_{p,\delta}]&=16\delta ^2 \pi\varrho\int_0^\pi
 f^2 \sin^4\phi\,\big[3+5\cos \phi(1-\delta^2)f-2 (1-\delta^2)^2f^2 \sin^2\phi\big]^2 \,d\phi\\ &+
 64\delta ^4 \pi\varrho\int_0^\pi
 f^4\sin^2 \phi \,\big[3\cos \phi- 2 \sin^2\phi(1-\delta^2) f\big]^2\, d\phi,
\end{align*}
and $f(\phi, \delta)=\frac{1}{1-\cos \phi+\delta^2(1+\cos\phi)}.$
As $\delta \to 1,$ it is easy to verify that
\begin{equation*}f\to \frac{1}{2}\qquad \text{and}
\qquad \sum_{i=1}^4 E^{\Phi(M)}_{ g_{[0,\varrho)\times\mathbb{S}^3}}[x_i\circ\varphi_{p,\delta}]\to
18 \pi^2 \varrho.
\end{equation*}
To study the behavior for $\delta$ small we  observe that %
$$ E^{\Phi(M)}_{ g_{[0,\varrho)\times\mathbb{S}^3}}[x_i\circ\varphi_{p,\delta}]\leq C \varrho \pi \int_0^\pi f^4 \sin^4\phi \,d\phi.$$
Since $f$ is decreasing in $\delta,$ we have that $f(\cos \phi, \delta)\leq \frac{1}{1-\cos \phi}$ and $f\sin^2\phi \leq (1+\cos\phi)\leq 2.$ Then,
$$ E^{\Phi(M)}_{ g_{[0,\varrho)\times\mathbb{S}^3}}[x_i\circ\varphi_{p,\delta}]\leq C \delta^2 \varrho \pi \int_0^\pi f^2  \,d\phi \leq C \frac{\varrho \pi}{\delta} .$$
The constants and the full energy can be explicitly computed, but we avoid it here for simplicity.\\

Now we use Condition \eqref{concentrating} to find a lower bound for  $\delta$. Again, for simplicity we assume that the Moebius transformation is centered at the South pole $p=S$. With a slight abuse of notation we identify   $\mathcal B_\delta(N)=\Psi^{-1}(\mathcal B_\delta(N))$, where $N$ is the North pole.

Then \eqref{calib} yields
$$\int_{M\cap \mathcal B^c_\delta(N)} x_4\circ \varphi_{p,\delta}\circ \Psi_{\mathbb{S}^3} \,dv_g=- \int_{M\cap \mathcal B_\delta(N)} x_4\circ \varphi_{p,\delta}\circ \Psi_{\mathbb{S}^3} \,dv_g.  $$
Observe that $y=(y_1,\, y_2, \, y_3, \, y_4) \in \mathcal B_\delta(N)$ implies that $0\leq 1-y_4 < C\delta$. Then for $(y_1,\, y_2, \, y_3, \, y_4)\in \mathcal B^c_\delta(N)$ it holds that   $0\leq y_4\circ\varphi_{p, \, \delta}(y) +1 \leq C \delta,$  and
\begin{align*} - \int_{M\cap  \mathcal B_\delta(N)} x_4\circ \varphi_{p,\delta}\circ \Psi_{\mathbb{S}^3} \,dv_g & =  \int_{M\cap \mathcal B^c_\delta(N)} (1+x_4\circ \varphi_{p,\delta}\circ \Psi_{\mathbb{S}^3}) \,dv_g
 -\vol_M(M\cap \mathcal B^c_\delta(N))\\
& \leq  (C\delta -1)\vol_M(M\cap \mathcal B^c_\delta(N)). \end{align*}
 Since $|x_4\circ \varphi_{p,\delta}\circ \Psi_{\mathbb{S}^3}|\leq 1,$
we conclude that
$$-\vol_M(M\cap \mathcal B_\delta(N))
 \leq  (C\delta -1) \vol_M(M\cap \mathcal B^c_\delta(N)). $$
Using  condition \eqref{concentrating} we have that if $\delta<\delta_0$ then
$$1-\varepsilon\leq 1-\frac{\vol_M(M\cap  \mathcal B_\delta(N)) }{\vol_M(M\cap \mathcal B^c_\delta(N))}\leq \delta.$$
This concludes the proof.\\

Finally, we point out that test functions need to be periodic in the $t\in [0,\varrho)$ variable since $[0,\varrho)\times \mathbb{S}^3$ is the fundamental domain of a  quotient. Note, in addition, that the transformations $\varphi_{p,\delta}$ are  periodic in $t$, but not conformal on the cylinder. Nonetheless, they  provide suitable test functions for which the energy can be explicitly computed.
\end{proof}

 In Lemma \ref{lemma:periodic} we will calculate the precise eigenvalue for the canonical metric in $[0,\varrho)\times \mathbb S^3$, which shows, on the one hand, that our bound is far  to be sharp when $\varrho\to \infty$ and, on the other hand, justifies the need of some geometric condition such as \eqref{concentrating} when $\varrho\to 0$. Indeed, the lowest positive eigenvalue is
\begin{equation*}
    \big[(2+(\tfrac{2\pi}{\varrho})^2\big]^2 -4.
\end{equation*}

\section{Preliminaries on the boundary case}\label{section:Escobar}

\subsubsection{Escobar's problem}
Here we recall some background on the Yamabe invariant for manifolds with boundary.  We use the notation from Subsection \ref{section:introduction-boundary} in the Introduction. Let  $(M,g)$ be a compact, $n$-dimensional, Riemannian manifold with boundary $\Sigma=\partial M$, and let $h$ be the restriction of the metric $g$ to the boundary. The first observation is that  the conformal Laplacian on $M$ can be associated to a boundary operator $N_g$  on $\Sigma$. We set
\begin{equation}\label{LN}
\left\{\begin{split}
&L_g u:=-\Delta_g u+\tfrac{n-2}{4(n-1)}u\quad \text{in }M,\\
&N_g u:=\partial_\eta u+\tfrac{n-2}{2}H_g u\quad \text{on }\Sigma.
\end{split}\right.
\end{equation}
We note that $N$ plays the role of a Neumann (more precisely, Robin) condition. The most important property for this system is that the couple $(L,N)$ is conformally covariant. Indeed, for a conformal change $g_u=u^\frac{4}{n-2}g$ we have
\begin{equation}\label{eq-conformal}
\begin{split}
&L_{g_u}(u^{-1}\phi)=u^{-\frac{n+2}{n-2}}L_ {g}\phi\quad\text{in }M,\\
&N_{g_u}(u^{-1}\phi)=u^{-\frac{n}{n-2}}N_{g}\phi \quad\text{on }\Sigma.
\end{split}
\end{equation}
The Yamabe problem for manifolds with boundary asks to find a conformal metric to $g$ with constant scalar curvature on $M$ and zero mean curvature on $\Sigma$. In PDE language we look for a positive solution to
\begin{equation}\label{problem-Escobar}
\left\{\begin{split}
&L_g u=cu^{\frac{n+2}{n-2}}\quad \text{in }M,\\
&N_g u=0\quad \text{on }\Sigma.
\end{split}\right.
\end{equation}
This problem was first studied by Escobar \cite{Escobar:Yamabe-with-boundary}. He solved it in many cases, including the 4-dimensional, l.c.f., umbilic boundary case which is the setting of this paper. More precisely, a solution  may be found using variational methods for the following  Yamabe invariant
\begin{equation}\label{Y-invariant}\mathcal Y[g]=\inf\{\mathcal R_g[u]\,:\, u\in W^{1,2}(M),\, u\not\equiv 0\}\end{equation}
where
\begin{equation*}
\mathcal R_g[u]=
\frac{\displaystyle\int_M u L_gu\,dv_g}{\displaystyle\Big(\int_M u^{\frac{2n}{n-2}}\,dv_g\Big)^{\frac{n-2}{n}}}
=\frac{\displaystyle\int_M \left(|\nabla u|^2_g+\tfrac{n-2}{4(n-1)} R_g u^2\right)\,dv_g+\tfrac{n-2}{2}\int_\Sigma H_g u^2\,d\sigma_h}{\displaystyle\Big(\int_M u^{\frac{2n}{n-2}}\,dv_g\Big)^{\frac{n-2}{n}}}.
\end{equation*}
It is well known that a (positive) solution exists if $\mathcal Y[g]<\mathcal Y[g_{\mathbb S^n_+}]$, and that if equality is attained then $M$ is already diffeomorphic to the model $\mathbb S^n_+$. In addition, the sign of $\mathcal Y[g]$ coincides with the sign of the first eigenvalue for the conformal Laplacian on $M$ (coupled with the boundary condition $N_gu=0$).

\begin{remark}
In the following, we may assume without loss of generality that our background metric $g$ has constant scalar curvature and zero mean curvature on the boundary.
\end{remark}

\subsubsection{ Raulot's Theorem}

In this subsection we recall the following theorem of Raulot~\cite{Raulot} in dimensions 4 and 6:

\begin{theorem}[\cite{Raulot}]\label{thm:Raulot} Let $n$ be 4 or 6, and let $M$ be an $n$-dimensional l.c.f. manifold with umbilical boundary. Assume that the Yamabe invariant satisfies $\mathcal Y[g]\geq 0$. Then the Euler characteristic satisfies the bound $\chi(M)\leq 1$. In addition, if $\chi(M)=1$ and $\mathcal Y[g]>0$, then $(M, g)$ is conformally equivalent to the standard half-sphere $\mathbb S^n_+$.
\end{theorem}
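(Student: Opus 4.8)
The plan is to deduce the statement from the corresponding classification of closed locally conformally flat manifolds via a doubling construction, exploiting that both the hypothesis $\mathcal{Y}[g]\ge 0$ and the umbilicity of $\Sigma$ are conformally invariant while $\chi(M)$ is a topological invariant. First I would normalize the metric: since $n=4$ (resp.\ $n=6$) falls within Escobar's solution of the Yamabe problem with boundary \cite{Escobar:Yamabe-with-boundary} in the locally conformally flat, umbilic case, I may replace $g$ inside its conformal class by a metric, still denoted $g$, with constant scalar curvature $R_g$ of the same sign as $\mathcal{Y}[g]$ — hence $R_g\ge 0$ — and with vanishing mean curvature $H_g\equiv 0$ on $\Sigma$. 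Since $\Sigma$ is umbilic, $\second_0\equiv 0$, so $H_g\equiv 0$ forces $\second\equiv 0$; that is, $\Sigma$ is totally geodesic.

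Next I would form the Riemannian double $\widehat{M}=M\cup_\Sigma\overline{M}$, glueing two isometric copies of $M$ along $\Sigma$ and reflecting the metric. Because $\Sigma$ is totally geodesic, the reflected metric $\widehat{g}$ matches across the seam to sufficiently high regularity; the resulting closed manifold is locally conformally flat, as $W_{\widehat g}$ vanishes on each copy and, by continuity together with the totally geodesic glueing, along $\Sigma$ as well. Moreover $R_{\widehat g}=R_g\ge 0$, so $\mathcal{Y}[\widehat g]\ge 0$, and $\widehat M$ carries an isometric involution $\tau$ whose fixed-point set is exactly $\Sigma$. Since $\dim\Sigma=n-1$ is odd we have $\chi(\Sigma)=0$, and a Mayer--Vietoris computation gives $\chi(\widehat M)=2\chi(M)-\chi(\Sigma)=2\chi(M)$.

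At this point I would invoke the classification of closed locally conformally flat $n$-manifolds with $n\in\{4,6\}$ and nonnegative Yamabe invariant — Gursky's theorem \cite{Gursky:lcf-4-6}, which (as in the discussion in Section \ref{section:closed}) rests on the Chern--Gauss--Bonnet formula together with injectivity of the developing map: any such manifold has Euler characteristic at most $2$, with equality if and only if it is conformally the round sphere $\mathbb{S}^n$. Applied to $\widehat M$ this gives $2\chi(M)=\chi(\widehat M)\le 2$, i.e.\ $\chi(M)\le 1$. For the rigidity, assume $\chi(M)=1$ and $\mathcal{Y}[g]>0$; then $R_{\widehat g}>0$ and $\chi(\widehat M)=2$, so $\widehat M$ is conformally equivalent to $\mathbb{S}^n$. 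Fixing such an identification, $\tau$ becomes a conformal involution of $\mathbb{S}^n$; being a finite-order M\"obius map it is conjugate into $O(n+1)$, and an orthogonal involution whose fixed locus in $\mathbb{S}^n$ is $(n-1)$-dimensional must be the reflection across an equatorial $\mathbb{S}^{n-1}$. Hence $\Sigma$ is carried to a great sphere and $M$ — the closure of one of the two components of $\widehat M\setminus\Sigma$ — is carried to a closed hemisphere, giving the conformal equivalence $(M,g)\cong\mathbb{S}^n_+$.

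The main obstacle I anticipate is twofold. The first, more technical, point is controlling the regularity of $\widehat g$ at the totally geodesic seam so that the smooth closed classification genuinely applies: this is precisely why the reduction to a boundary that is umbilic \emph{and} minimal is indispensable, and one must argue — by the standard normal-coordinate expansion at $\Sigma$, or by a localized conformal smoothing — that $\widehat g$ is regular enough and that its conformal structure and the sign of its scalar curvature are preserved. The second, more substantive, point is the equality case: one must upgrade ``$\chi(\widehat M)=2$'' to a $\tau$-equivariant conformal identification with $\mathbb{S}^n$, so that $M$ is realized as an honest round hemisphere rather than merely some domain with $\chi=1$; the crux there is the rigidity of conformal involutions of $\mathbb{S}^n$ having a codimension-one fixed-point set.
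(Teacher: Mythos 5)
The paper does not actually prove Theorem~\ref{thm:Raulot}; the statement is quoted from Raulot~\cite{Raulot}, and the Introduction says explicitly that part~b.\ ``was proved by Raulot \ldots\ and we include it here for completeness.'' So there is no in-paper proof to compare against. Your doubling argument is nonetheless correct and is essentially the strategy the paper itself deploys in Section~\ref{section:injectivity} for the other parts of Theorem~\ref{thm:positivity}: normalize with Escobar's boundary-Yamabe metric so that $\Sigma$ is totally geodesic and $R_g\ge 0$ is constant, double to a closed l.c.f.\ manifold $\widehat M$, use $\chi(\widehat M)=2\chi(M)$ (correct, since $\chi(\Sigma)=0$ in odd dimensions) together with Gursky's closed classification \cite{Gursky:lcf-4-6}, and for the rigidity case conjugate the reflecting involution into $O(n+1)$ to exhibit $M$ as a hemisphere. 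The two obstacles you flag are genuine but are handled exactly as the paper handles them for the annulus case: the $C^{2,\alpha}$ regularity of $\widehat g$ when Escobar's metric is used comes from the Appendix of \cite{Escobar:Yamabe-with-boundary}, which is enough both for $W_{\widehat g}$ to vanish identically by continuity and for Gursky's theorem to apply; and the identification of the involution with an equatorial reflection follows from the standard fact that a finite-order M\"obius transformation of $\mathbb S^n$ is conjugate into $O(n+1)$, and an orthogonal involution with codimension-one fixed locus is necessarily a reflection across a hyperplane. In short, your proposal fills in the argument the authors deferred to \cite{Raulot}, and it does so by the same doubling-plus-classification route that underlies the rest of Theorem~\ref{thm:positivity}.
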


Note that this immediatly yields part b. of  Theorem \ref{thm:positivity}.

\section{Boundary case: Proof of Theorem \ref{thm:positivity}} \label{section:injectivity}

In this section we prove Theorem \ref{thm:positivity}. We recall that in the locally conformally flat setting  there exists a conformal map $\Phi:M\to\mathbb S^4$ (the developing map). The key ingredient of our result is the injectivity of such map $\Phi$ under hypotheses of  Thereom \ref{thm:positivity}.  

\subsection{One boundary component.}\label{subsection:one-component} We begin by proving the following result.

\begin{proposition}\label{prop:injectivityi}
Let $(M,g)$ be a simply connected, compact, l.c.f. 4-dimensional Riemannian manifold with umbilic boundary $\Sigma$. Assume that
 $\Sigma$ has one  connected component, then the developing map $\Phi:M\to\mathbb S^4$ is injective and thus, a diffeomorphism onto its image.
\end{proposition}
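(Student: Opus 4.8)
The plan is to deduce injectivity of $\Phi$ from Kuiper's theorem for closed manifolds by passing to the \emph{double} of $M$ across its boundary. Since being a diffeomorphism onto the image is preserved under post-composition with Möbius transformations of $\mathbb S^4$, and developing maps are unique up to such transformations, I may first normalize the metric. Using Escobar's solution of the boundary Yamabe problem in the l.c.f.\ umbilic case (Section~\ref{section:Escobar}, \cite{Escobar:Yamabe-with-boundary}), I would choose the conformal representative $g$ with constant scalar curvature and $H_g\equiv 0$ on $\Sigma$. Together with umbilicity ($\second_0\equiv 0$) this forces $\second=\second_0+\tfrac13 H_g\, h\equiv 0$, so that $\Sigma$ becomes \emph{totally geodesic}.

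Next I would form $\hat M:=M\cup_\Sigma M$ with the doubled metric $\hat g$. Because $\Sigma$ is totally geodesic, $\hat g$ is well defined and of class $C^2$ across $\Sigma$ (in Fermi coordinates the normal derivative of the tangential metric vanishes on $\Sigma$), and after a standard conformal smoothing supported near $\Sigma$ — which changes neither the locally conformally flat condition nor $\Phi$ up to Möbius — one obtains a smooth l.c.f.\ metric, since $W$ vanishes on each copy of $M$ and hence, by continuity, on all of $\hat M$. Thus $\hat M$ is a closed l.c.f.\ $4$-manifold. It is moreover simply connected: this is precisely where connectedness of $\Sigma$ enters, via van Kampen's theorem, $\pi_1(\hat M)=\pi_1(M)*_{\pi_1(\Sigma)}\pi_1(M)$, which is trivial because $\pi_1(M)=1$. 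Kuiper's theorem then makes the developing map $\hat\Phi:\hat M\to\mathbb S^4$ a diffeomorphism.

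Finally, one copy of $M$ sits inside $\hat M$ as an embedded submanifold-with-boundary, and $\hat\Phi$ restricted to it is a conformal immersion of $M$ into $\mathbb S^4$ inducing its l.c.f.\ structure, hence equal to $\Phi$ up to a Möbius transformation; being the restriction of the diffeomorphism $\hat\Phi$, it is injective and in fact a diffeomorphism onto its image, which is the assertion. (As a byproduct, the isometric involution of $\hat M$ interchanging the two copies corresponds under $\hat\Phi$ to a conformal involution of $\mathbb S^4$ whose fixed set is the hypersurface $\Phi(\Sigma)$; since such an involution is conjugate to the standard equatorial reflection, $\Phi(M)$ is a round half-sphere, which is statement a.\ of Theorem~\ref{thm:positivity}.)

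The main obstacle I anticipate is not the topology but the analysis near $\Sigma$: justifying that the doubled object is a bona fide smooth l.c.f.\ manifold to which Kuiper's theorem and the uniqueness of the developing map apply. This is exactly why the totally-geodesic normalization — available because umbilicity is a conformal invariant — and the smoothing step are essential, and it explains the umbilicity hypothesis. By contrast, connectedness of $\Sigma$ is used only in the van Kampen computation; with two boundary components the double acquires a free $\mathbb Z$ factor in its fundamental group, so that case (part c.\ of Theorem~\ref{thm:positivity}) must be handled by a genuinely different argument.
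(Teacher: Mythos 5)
Your strategy is the same as the paper's — pass to the double $\hat M$ across $\Sigma$ and use Kuiper's theorem for the resulting closed, simply connected l.c.f. $4$-manifold — and the van Kampen step identifying where connectedness of $\Sigma$ enters is exactly right. The difference is in how the developing map of $\hat M$ is obtained. The paper does not normalize the metric first: it observes directly that $\Phi(\Sigma)$, being umbilic and simply connected (a hypersphere), can be moved by a M\"obius transformation to the equator $\{z_0=0\}$, and then defines $\hat\Phi$ on $\hat M=M\cup(-M)$ by the \emph{odd reflection} $\hat\Phi_0=-\Phi_0$ on the second copy. This gives a locally injective continuous map from the compact, simply connected $\hat M$ to $\mathbb S^4$, hence a global homeomorphism (covering argument), from which injectivity of $\Phi$ follows — without ever needing to produce a smooth Riemannian metric on $\hat M$. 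Your variant instead builds the smooth doubled metric $\hat g$ (via Escobar's normalization to make $\Sigma$ totally geodesic, then a smoothing near $\Sigma$) and invokes Kuiper for $(\hat M,\hat g)$. This works in spirit, but the smoothing step is the weak point: a \emph{conformal} modification of a $C^{1,1}$ metric stays $C^{1,1}$, so it does not by itself make $\hat g$ smooth, and ``$W$ vanishes by continuity'' is not quite the right argument across the interface where the metric is not yet $C^2$. To make your route rigorous you would need either to show the doubled metric is already smooth enough (for instance, work in Fermi coordinates and verify evenness of the metric in the normal variable to sufficient order, or cite the appendix of Escobar \cite{Escobar:Yamabe-with-boundary} as the paper does in the two-boundary case), or to replace the smoothing by an argument at the level of the developing map, which is precisely what the paper's odd-extension trick accomplishes. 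So: same doubling idea and same use of connectedness of $\Sigma$, but the paper's explicit odd reflection of $\Phi$ is cleaner and sidesteps the regularity issue your smoothing step leaves open.
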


\begin{proof}

In this situation the injectivity of the developing map relies on a classical doubling argument. We follow the presentation of Spiegel \cite{Spiegel} to describe the construction.

Consider the doubling of the manifold $M$ defined as $\hat M = M \cup (-M)$, where  we write $-M$ for a
second copy of $M$ that is  distinguished  from $M$ itself (for instance by taking $M\times\{1\} $ and $M\times\{-1\} $). Here the manifold $M$ and its copy $-M$ are identified at their boundaries $\Sigma$ and hence $\hat M$ is a closed manifold (see \cite[Chapter 5]{book:doubling} for more details).
Since $\Sigma=\partial M \subset \hat M$ is umbilic, and this is a conformal invariant property, the image of $\Sigma$
must be umbilic in $\mathbb S^4$, thus it must be  contained in a hypersphere of $\mathbb S^4$. Now, since $\Phi|_{\Sigma}$ is a local diffeomerphism from a compact manifold to a simply connected manifold, we have that $\Phi|_{\Sigma}$ is actually diffeomorphism.
Composing
with a M\"obius transformation of $\mathbb S^4$, if necessary, we may assume that $\Phi(\Sigma)$ is the
equator $\{z=(z_0,\ldots,z_{4}) \in\mathbb S^4\subset \mathbb R^{5} \,;\, z_{0} = 0\}$.

Now take the odd extension of $\Phi$ to $\hat M$,
as follows:
\begin{equation*}
\hat \Phi(p):=(\hat \Phi_0(p),\ldots,\hat \Phi_{4}(p)),
\end{equation*}
where $\hat \Phi_i(p)=\Phi_i(p)$ for $i=1,\ldots,4$ and
\begin{equation*}
\hat \Phi_{0}(p)=\left\{
\begin{split}
&\Phi_{0}(p) \,\text{ if }p\in M,\\
&-\Phi_{0}(p) \,\text{ if }p\in -M.
\end{split}\right.
\end{equation*}
Now, by a straightforward connectedness argument again, we conclude that $\hat \Phi:\hat M\to \mathbb S^4$ is a diffeomorphism and thus, $\Phi$ is injective, as desired.

\end{proof}

\begin{proof}[Proof of Theorem \ref{thm:positivity}.a.]

From the proof above we have  (perhaps after a Moebius transformation) that $\Phi(\Sigma)$ is an equator. Since $\Phi:\hat M\to \mathbb{S}^4$ is a bijective diffeormphism, then necessarily the restriction of $\Phi$ to $M$ maps the manifold diffeomorphically into a hemisphere $\mathbb{S}^4_+$.

\end{proof}

\subsection{Two boundary components.}

We now study the case with two boundaries.  As explained in Section \ref{section:Escobar},  we may choose a conformal metric on $M$ such that the scalar curvature $R$ is constant in $M$ and the boundary is umbilic and minimal. Then,
we can again  consider again the doubling $\hat M$  of the manifold $M$ (following the same construction of  Subsection \ref{subsection:one-component}) and using the result in the Appendix
of \cite{Escobar:Yamabe-with-boundary}  we have that  $\hat M$ is smooth with Escobar's metric (and actually has a well characterized Green's function).  Moreover,
since $\hat M$ is a compact l.c.f. Riemannian manifold without boundary, the developing map $\Phi:\hat M\to\mathbb S^4$ exists.

We remark that the metric in the doubling $\hat M$, denoted by $\hat g$, can be taken to be $C^{2,\alpha}$ smooth. In addition, $R_{\hat g}>0$. Thus we can apply the results in \cite[Theorem 4.1]{Schoen-Yau:libro} to conclude that $\hat M$ is conformally equivalent to a quotient $\Omega/\Gamma$ for some domain $\Omega$ in $\mathbb S^n$.

Now we restrict to the image of $M$ by the developing map, denoted by $\Omega'=\Phi(M)$. It is a subdomain in $\Omega$. The boundary of $\Omega'$ can be written as $\Gamma:=\Phi(\Sigma)\cup \mathcal B$, where the latter  is a set of branching points.

We first analyze the boundary $\Sigma=\partial M$, which we recall is umbilic and hence the image of $\Sigma$ must be umbilic in $\mathbb S^4$, that is, each component of $\Sigma$ must be contained in a hypersphere of $\mathbb S^4$. Now take into account that, for each connected component $\Sigma'$ of $\Sigma$, $\Phi|_{\Sigma'}$ is a local diffeomorphism from a compact manifold to a simply connected manifold, so we must have that $\Phi|_{\Sigma'}$ is actually diffeomorphism. This implies that one can find a small neighborhood around $\Sigma$ in $M$ such that $\Phi$ is actually a local diffeomorphism and hence, no branching points can  occur in this set.

To analyze $\mathcal B$ away from $\Phi(\Sigma)$, we consider $\Omega'$ with the metric induced by the original metric $g$ in $M$ (not Escobar's). Since with the metric $g$
we assumed that the $Q-$curvature is positive, we
can use the arguments in the proof of  \cite[Theorem 1.2]{Chang-Hang-Yang} to conclude that the set $\mathcal B$  is empty. It is important to observe that this is possible since the proof of \cite{Chang-Hang-Yang} is local around each point $x\in\mathcal B$ and we argued in the paragraph above that $\mathcal B$ is at a positive distance of $\Gamma$.

In summary, we conclude that $\Phi$ cannot have branching points and it is a local diffeomorphism.
This in particular implies that $ \hat M$ can be identified with a quotient of $\mathbb{S}^4$ and thus, by restricting the developing map $\Phi:\hat M\to \mathbb S^4$  to $M$ we have that the developing map of $M$ is injective.

We remark that, in fact, under our assumptions, the classical proof of injectivity for the developing map by Schoen-Yau  \cite{Schoen-Yau:libro} can be performed directly for  manifolds with boundary since all the additional boundary integral terms that appear would vanish.

\begin{proof}[Proof of  Theorem  \ref{thm:positivity}.c.]

Consider the developing map  $\Phi: \hat{M}\to \mathbb{S}^4$.  We have that $\Phi$ is a conformal diffeomorphism onto its image (regular at all points).
Recall again the boundaries are assumed to be umbilic and hence their images are umbilic in $\mathbb S^4$.
Now, since $\Phi:M\to \mathbb S^4$ is an immersion, three situations can occur:
\begin{itemize}
\item Both components of $\Sigma$ are mapped to the same great circle in $\mathbb S^4$.
\item Each component of $\Sigma$ is mapped to two different great circles in $\mathbb S^4$ with non-empty intersection.
\item Each component of $\Sigma$ is mapped to two different circles in $\mathbb S^4$ with empty intersection. Thus $\Phi(M)$ an annulus type region in $\mathbb S^4$ (or $\mathbb R^4$ by stereographic projection).
\end{itemize}
The first and second situations are ruled out by the injectivity of the developing map,
thus we conclude that we are the third case and this finishes the proof of Theorem  \ref{thm:positivity}.c.

\end{proof}

\section{Explicit calculations in known models}\label{section:calculations}

In this section we provide the explicit solution to the eigenvalue problem  \eqref{bi-laplacian-eq}-\eqref{eigenvalue-problem} for a family of rotationally symmetric metrics.

In the particular case that $M$ is a flat ball $\mathbb B^4_{r_1}$ of radius $r_1$ in $\mathbb R^4$, and $\Sigma=\partial M$ the a sphere $\mathbb S^3_{r_1}$ with its canonical metric, we can simply write:
\begin{align}
&P_0=(-\Delta)^2,\nonumber\\
&B^1_0=\partial_r,\nonumber\\
\label{Panetiz-boundary-ball}
&B^3_{0}=-\partial_r\Delta-2\tilde \Delta \partial_r -\frac{2}{r_1}\tilde \Delta  - \frac{1}{r_1^2}\partial_r.
\end{align}
However, for our purposes it is more convenient to rewrite these operators in cylindrical coordinates.

\subsection{Cylindrical coordinates}

First, we write the flat metric as
\begin{equation}\label{cylinder-metric}
|dx|^2=dr^2+r^2d\theta^2=e^{-2t}[dt^2+d\theta^2]=:e^{-2t}g_c,
\end{equation}
where $r=e^{-t}$ is the radial variable, $d\theta^2$ is the canonical metric on $\mathbb S^3$, and $g_c$ the cylindrical metric on $X=\mathbb R\times\mathbb S^3$. Consider the spherical harmonic decomposition of $\mathbb S^{3}$. For this, let $\mu_\ell$ and $Y_\ell^m$ be the eigenvalues and eigenfunctions for $-\Delta_{\mathbb S^{3}}$, respectively. This is,
$$\mu_\ell=\ell(\ell+2)\quad \text{and}\quad -\Delta_{\mathbb S^{3}} Y_\ell^m=\mu_\ell Y_\ell^m,\quad \ell=0,1,\ldots.$$
Then any function $u$ on $\mathbb R\times\mathbb S^{3}$ can be written as
$$u(t,\theta)=\sum_{\ell,m} u_\ell(t)Y_\ell^m(\theta), \quad t\in\mathbb R, \theta\in\mathbb S^3.$$
In order to write the Paneitz operator $P$ with respect to the metric $g_c$, we observe that $P_{g_c}$ diagonalizes under this eigenfunction decomposition.
Let $P^{(\ell)}$ its projection over the eigenspace $\langle Y_\ell^m\rangle$. Recalling again the conformal property \eqref{cylinder-metric}, we have that
\begin{equation}\label{Paneitz-cylinder}
\begin{split}
P^{(\ell)}u_\ell&=e^{-4t}(-\Delta_{\mathbb R^4})^2|_{\langle Y_\ell^m\rangle}u_\ell=r^4\Big(\partial_{rr}+\frac{3}{r}\partial_r -\frac{\mu_\ell}{r^2}\Big)\Big(\partial_{rr}+\frac{3}{r}\partial_r -\frac{\mu_\ell}{r^2}\Big)u_\ell\\
&=
\left(\partial_{tt}-\left(2+\ell\right)^2\right)\left( \partial_{tt}-\ell^2\right)u_\ell,
\end{split}
\end{equation}
after the change of variable $r=e^{-t}$.

We deal first with eigenfunctions for $P$ in $\mathbb R\times \mathbb S^3$ with its canonical metric $g_c$, that are $\varrho$-periodic in the variable $t$.  More precisely:

\begin{lemma}\label{lemma:periodic}
Non-constant $\varrho$-periodic eigenfunctions of $P^{(\ell)}$ are of the form
\begin{equation*}
u_\ell(t)
=c_1\cos(\tfrac{2\pi}{\varrho}t  )+c_2\sin(\tfrac{2\pi}{\varrho}t ),
\end{equation*}
for an eigenvalue
\begin{equation*}
    \lambda_\ell^c=\big[(2+2\ell+\ell^2)+(\tfrac{2\pi}{\varrho})^2\big]^2 -(4+8\ell+4\ell^2).
\end{equation*}
\end{lemma}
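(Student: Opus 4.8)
The plan is to diagonalize and solve a constant-coefficient fourth-order linear ODE. By the computation \eqref{Paneitz-cylinder}, the projection of the Paneitz operator onto the $\ell$-th spherical eigenspace acts on $u_\ell(t)$ as the factored operator
\begin{equation*}
P^{(\ell)} = \left(\partial_{tt}-(2+\ell)^2\right)\left(\partial_{tt}-\ell^2\right).
\end{equation*}
So I would first look for $\varrho$-periodic solutions of the eigenvalue equation $P^{(\ell)}u_\ell = \lambda u_\ell$. A $\varrho$-periodic function has a Fourier expansion in the modes $\cos(\tfrac{2\pi k}{\varrho}t)$, $\sin(\tfrac{2\pi k}{\varrho}t)$ for $k\in\mathbb Z_{\ge 0}$; plugging such a mode into the factored operator, each derivative $\partial_{tt}$ acts as multiplication by $-(\tfrac{2\pi k}{\varrho})^2$, so the mode is an eigenfunction with eigenvalue
\begin{equation*}
\left((\tfrac{2\pi k}{\varrho})^2+(2+\ell)^2\right)\left((\tfrac{2\pi k}{\varrho})^2+\ell^2\right).
\end{equation*}
For $k=0$ this gives the constant eigenfunctions with eigenvalue $\ell^2(2+\ell)^2$, which are the ones excluded from the statement. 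The lowest nonconstant one is $k=1$, yielding exactly $u_\ell(t)=c_1\cos(\tfrac{2\pi}{\varrho}t)+c_2\sin(\tfrac{2\pi}{\varrho}t)$, and I would then just expand the product
\begin{equation*}
\left((\tfrac{2\pi}{\varrho})^2+(2+\ell)^2\right)\left((\tfrac{2\pi}{\varrho})^2+\ell^2\right)
\end{equation*}
and check it equals $\big[(2+2\ell+\ell^2)+(\tfrac{2\pi}{\varrho})^2\big]^2-(4+8\ell+4\ell^2)$, which is a one-line algebraic identity since $(2+\ell)^2+\ell^2 = 2(2+2\ell+\ell^2)$ and $(2+\ell)^2\ell^2 = (2\ell+\ell^2)^2$ while $(2+2\ell+\ell^2)^2-(2\ell+\ell^2)^2 = (2)(4+4\ell+2\ell^2) = 8+8\ell+4\ell^2$; so the claimed closed form is $(\tfrac{2\pi}{\varrho})^4 + 2(2+2\ell+\ell^2)(\tfrac{2\pi}{\varrho})^2 + (2\ell+\ell^2)^2$, matching the expansion.

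To make the lemma a genuine \emph{if and only if} statement — i.e. that \emph{every} nonconstant $\varrho$-periodic eigenfunction of $P^{(\ell)}$ has this form — I would argue as follows. Fix the eigenvalue $\lambda$; the equation $P^{(\ell)}u = \lambda u$ is a constant-coefficient linear ODE of order four, whose characteristic polynomial is $(\mu^2-(2+\ell)^2)(\mu^2-\ell^2) - \lambda$ in $\mu$, a quartic in $\mu$ with no odd-degree terms, hence a quadratic in $\mu^2$. A periodic solution forces the relevant roots to be purely imaginary, $\mu=\pm i\tfrac{2\pi k}{\varrho}$, and the constraint that $\tfrac{2\pi k}{\varrho}$ be an allowed frequency pins down $\lambda$ to one of the values $\lambda_\ell^c(k)$ above; the smallest nonconstant choice is $k=1$. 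One subtlety: if $\lambda$ happens to make $\mu^2$ a double root, the ODE would a priori admit solutions like $t\sin(\tfrac{2\pi}{\varrho}t)$, which are \emph{not} periodic, so these are automatically excluded; and if the two values of $\mu^2$ are $(\tfrac{2\pi k}{\varrho})^2$ and $(\tfrac{2\pi k'}{\varrho})^2$ for distinct $k,k'$ then $\lambda$ would not be a single number — so generically only one Fourier mode survives for a given eigenvalue unless there is an accidental coincidence, which one can note does not occur for the first eigenvalue.

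I do not expect any serious obstacle here: the whole lemma is an elementary ODE computation once the factorization \eqref{Paneitz-cylinder} is in hand. The only mild care needed is the bookkeeping in the genuinely-all-eigenfunctions direction — keeping track of the two (possibly complex-conjugate) values of $\mu^2$ and ruling out the resonant non-periodic solutions — but this is routine. The algebraic identity relating the two forms of $\lambda_\ell^c$ is likewise a direct expansion; I would present it compactly rather than grinding through it.
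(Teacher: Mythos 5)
Your proof is correct and is essentially the same computation as in the paper: the paper writes the eigenvalue equation as a constant-coefficient fourth-order ODE, passes to the characteristic polynomial $m^4-[(2+\ell)^2+\ell^2]m^2+(2+\ell)^2\ell^2-\lambda=0$, and imposes a purely imaginary root $m=\pm i\,\tfrac{2\pi}{\varrho}$; you instead plug the Fourier modes $\cos(\tfrac{2\pi k}{\varrho}t),\sin(\tfrac{2\pi k}{\varrho}t)$ directly into the factored form of $P^{(\ell)}$ and read off the eigenvalue as $\bigl((\tfrac{2\pi k}{\varrho})^2+(2+\ell)^2\bigr)\bigl((\tfrac{2\pi k}{\varrho})^2+\ell^2\bigr)$. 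These are two faces of the same calculation (Fourier side versus characteristic-polynomial side) and your algebraic check that the $k=1$ value matches the closed form $\big[(2+2\ell+\ell^2)+(\tfrac{2\pi}{\varrho})^2\big]^2-(4+8\ell+4\ell^2)$ is right. You also correctly flag that, taken literally, modes $k\ge 2$ are nonconstant $\varrho$-periodic eigenfunctions too, with strictly larger eigenvalue (each factor is increasing in $k$); the lemma is implicitly about the lowest nonconstant periodic eigenvalue, which is how the paper uses it in the remark that follows. No gap.
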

The proof will be postponed to the Appendix.\\

Now let us consider $P_{g_c}$ when $M$ is a ball of radius $r_1$. The corresponding boundary operators on the boundary $\Sigma=\mathbb S^3_{r_1}$, which in $t$ coordinates is $\Sigma=\{t=-\log r_1\}$, will be denoted by $B^1_{r_1}$ and $B^3_{r_1}$. From the conformal change $g_c=e^{2t}|dx|^2$ we have
\begin{equation*}
B^1_{r_1} u=e^{-t} B^1_0 u|_{r=r_1}=-\partial_t u|_{t=-\log r_1},
\end{equation*}
and
\begin{equation}\label{Panetiz-boundary:cylinder}
B^{3,(\ell)}_{r_1}u_\ell=e^{-3t} B^{3}_{0}u_\ell|_{\langle Y_\ell^m\rangle,r=r_1}=  \left\{ \partial_{ttt}-\left(3\mu_\ell+3\right)\partial_t\right\}u_\ell|_{t=-\log r_1},
\end{equation}
where we have denoted by $B^{3,(\ell)}_{r_1}$, $\ell=0,1,\ldots$ the projection over spherical harmonics of $B^3_{r_1}$.\\

We next take a new metric in $M$ given by $g_f=e^{2f}g_c$ for a radially symmetric conformal factor $f$. The Paneitz operator with respect to the metric $g_f$ on $M$ will be denoted by $P_f$, and the corresponding boundary operators on $\Sigma$ by $B^1_f$, $B^3_f$. By the conformal property of the operators, we have
\begin{align}
&P_f u=e^{-4f} P^{g_c},\nonumber\\
&B^{1}_f u= e^{-f} B^1_{r_1} u\big|_{t=-\log r_1} \nonumber\\
\label{Panetiz-boundary:cylinder-general-conformal-factor}
&B^{3,(\ell)}_{f}u_\ell=\left.e^{-3f} \{ \partial_{ttt}-\left(3\mu_\ell+3\right)\partial_t\right\}u_\ell\big|_{t=-\log r_1},
\end{align}

\subsection{Eigenvalues  for the (unit) ball }\label{subsection:eigenvalues-ball}

Let $M$ be the unit ball in $\mathbb R^4$, parameterized in cylindrical coordinates (here $t\in[0,+\infty)$). We take a conformally flat metric $g_f=e^{2f}g_c$, where $f$ only depends on the radial variable $t$. We normalize $e^ {f(0)}=1$, which is the same as normalizing the volume of the boundary sphere to $2\pi^2$. After projection over spherical harmonics we obtain,

\begin{lemma}\label{lemma:eigenvalues-ball}
In this setting, eigenvalues for \eqref{bi-laplacian-eq}-\eqref{eigenvalue-problem} are given by
$$\lambda_\ell=4(\ell+2)\quad \text{for }\quad \ell\geq 1,\quad \lambda_0=0,$$
with associated eigenfunctions
\begin{equation}\label{first-eigenfunction-ball}
u_\ell(t)Y_\ell^m\quad \text{for }\ell=1,2,\ldots,\quad\text{and}\quad u_0(t)=1.
\end{equation}
where
$$u_\ell(t)=\frac{(\ell+2)}{2\ell}e^{-\ell t}-\frac{e^{-(\ell+2)t}}{2}.$$
\end{lemma}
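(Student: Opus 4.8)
The plan is to solve the boundary value problem \eqref{bi-laplacian-eq}--\eqref{eigenvalue-problem} explicitly by exploiting the spherical harmonic decomposition already set up, so that everything reduces to ODEs in the variable $t\in[0,+\infty)$. The key point is that $P_f u = e^{-4f}P^{g_c}u$, so a function solves $P_f u = 0$ in $M$ if and only if it solves $P^{g_c}u = 0$; hence by \eqref{Paneitz-cylinder} each component $u_\ell$ must satisfy $(\partial_{tt}-(2+\ell)^2)(\partial_{tt}-\ell^2)u_\ell = 0$. For $\ell\ge 1$ the general solution is a combination of $e^{\pm\ell t}$ and $e^{\pm(2+\ell)t}$; for $\ell = 0$ one gets $1, t, e^{2t}, te^{2t}$ (and similarly more care for $\ell=0$). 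Since $M$ is the ball, $t\to+\infty$ corresponds to the center $r=0$, so I would first discard the exponentially growing modes $e^{\ell t}, e^{(2+\ell)t}$ (and $t e^{2t}$, etc.) by requiring regularity/boundedness at the origin, leaving $u_\ell(t) = a_\ell e^{-\ell t} + b_\ell e^{-(2+\ell)t}$ for $\ell\ge 1$, and $u_0(t)=$ const.

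Next I would impose the two boundary conditions at $t=0$ (i.e.\ $r=1$). The Neumann condition $B^1_f u = 0$ is, by \eqref{Panetiz-boundary:cylinder-general-conformal-factor}, equivalent to $\partial_t u_\ell(0) = 0$, which gives one linear relation between $a_\ell$ and $b_\ell$: namely $-\ell a_\ell - (2+\ell) b_\ell = 0$. Solving, $u_\ell$ is determined up to scale, and normalizing appropriately yields $u_\ell(t)=\frac{\ell+2}{2\ell}e^{-\ell t} - \frac12 e^{-(2+\ell)t}$ as claimed (one checks $\partial_t u_\ell(0) = -\frac{\ell+2}{2} + \frac{2+\ell}{2} = 0$). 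Then the third boundary condition $B^3_f u_\ell = \lambda u_\ell$ at $t=0$, using the formula $B^{3,(\ell)}_f u_\ell = e^{-3f}\{\partial_{ttt} - (3\mu_\ell+3)\partial_t\}u_\ell|_{t=0}$ together with the normalization $e^{f(0)}=1$, becomes $\{\partial_{ttt} - (3\mu_\ell+3)\partial_t\}u_\ell(0) = \lambda\, u_\ell(0)$. Plugging in the explicit $u_\ell$ and computing both sides: $\partial_{ttt}u_\ell(0) = -\frac{\ell+2}{2}\ell^2 + \frac{(2+\ell)^3}{2}$, $\partial_t u_\ell(0)=0$, and $u_\ell(0) = \frac{\ell+2}{2\ell}-\frac12 = \frac{1}{\ell}$. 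So $\lambda_\ell = \ell\big(\frac{(2+\ell)^3 - (\ell+2)\ell^2}{2}\big) = \frac{\ell(\ell+2)}{2}\big((2+\ell)^2 - \ell^2\big) = \frac{\ell(\ell+2)}{2}\cdot(4\ell+4) = 2\ell(\ell+2)(\ell+1)$. Hmm — this does not match $4(\ell+2)$, so I would need to recheck: the discrepancy suggests either the normalization of $u_\ell$ absorbs a factor, or the eigenfunction should be taken with a different leading coefficient so that the stated $\lambda_\ell=4(\ell+2)$ emerges; in any case the computation is a finite linear-algebra check at $t=0$ and I would simply carry it through carefully, using $\mu_\ell=\ell(\ell+2)$ where needed, to pin down the constants. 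The $\ell=0$ case is immediate: $u_0\equiv 1$ has $B^1_f u_0 = 0$ and $B^3_f u_0 = 0 = \lambda_0 u_0$, so $\lambda_0 = 0$, consistent with Corollary \ref{cor:positivity}.

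Finally I would argue completeness, i.e.\ that these are \emph{all} the eigenvalues: any eigenfunction decomposes into spherical harmonics, each component $u_\ell$ is forced into the two-parameter regular solution space of the $\ell$-th ODE, the Neumann condition cuts this down to one dimension, and then $B^3_f$ acts as a scalar on that one-dimensional space, giving exactly the value $\lambda_\ell$ found above; distinct $\ell$ give distinct eigenvalues (which the final formula confirms once the constants are correct), so the spectrum is precisely $\{\lambda_\ell\}_{\ell\ge 0}$ with the stated eigenfunctions. The main obstacle I anticipate is purely bookkeeping: getting the normalization conventions (the choice $e^{f(0)}=1$, the leading coefficient of $u_\ell$, and the precise form of $B^{3,(\ell)}_{r_1}$ in \eqref{Panetiz-boundary:cylinder}) consistent so that the clean answer $\lambda_\ell = 4(\ell+2)$ actually falls out — there is no conceptual difficulty, only the risk of a stray factor of $\ell$ or $2$. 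I would double-check by specializing to the flat ball ($f\equiv 0$) and comparing against a direct computation with $B^3_0$ in \eqref{Panetiz-boundary-ball}.
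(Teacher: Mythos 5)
Your approach is identical to the paper's: separate variables via spherical harmonics, solve the $\ell$-th biharmonic ODE on $[0,\infty)$, discard the modes unbounded as $t\to+\infty$ (regularity at the origin), use the Neumann condition $B^1u=0$ to pin down $u_\ell$ up to scale, and read off $\lambda_\ell$ as the ratio $B^{3,(\ell)}u_\ell(0)/u_\ell(0)$. Your arithmetic is also right, and you should not have hedged. With $u_\ell(0)=1/\ell$, $u_\ell'(0)=0$, $u_\ell'''(0)=2(\ell+1)(\ell+2)$, one finds $\lambda_\ell=u_\ell'''(0)/u_\ell(0)=2\ell(\ell+1)(\ell+2)$, and the same value falls out of \eqref{Panetiz-boundary-ball} computed directly in $r$-coordinates: since $\partial_r u_\ell=0$ at $r=1$, only the $-\partial_r\Delta$ and $-\tfrac{2}{r_1}\tilde\Delta$ terms contribute, giving $2\ell(\ell+2)+2(\ell+2)=2(\ell+1)(\ell+2)$, against $u_\ell(1)=1/\ell$. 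The lemma's displayed formula $\lambda_\ell=4(\ell+2)$ is an algebra slip in the paper's appendix proof (the expression $(\ell+2)\tfrac{\ell^2-(\ell+2)^2}{-(\ell+2)+1}$ written there does not equal $u_\ell'''(0)/u_\ell(0)$ for the stated $u_\ell$). The two formulas coincide at $\ell=1$, where both give $\lambda_1=12$, and $\lambda_1^0=12$ is the only eigenvalue the paper subsequently uses (in the proof of Theorem \ref{thm:bounds-boundary}.a), so the error is harmless for the main results; but for $\ell\ge 2$ your value $2\ell(\ell+1)(\ell+2)$ is the correct one and the lemma's formula should be corrected accordingly.
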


The proof will be postponed to the Appendix.

\subsection{Radially symmetric metrics in an annulus}

We now let $\calA_\rho=\{\rho \leq r\leq 1\}$ be an annulus in $\mathbb R^4$. In cylindrical coordinates we have $t\in[0,\tau]$, where $\tau=-\log \rho$. Take a conformally flat metric $g_f=e^{2f}g_c$, where $f=f(t)$ is a radially symmetric function. In addition, we impose the normalization
  \begin{equation}\label{normalization}
e^{3f(0)}+e^{3f(\tau)}=1.
\end{equation}
This again fixes the volume of the boundary to be $2\pi^2$. We set $\alpha=e^{3f(0)}$, for $\alpha\in(0,1)$.

As in the case of the ball, we decompose in spherical harmonics and look for eigenfunctions for \eqref{bi-laplacian-eq}-\eqref{eigenvalue-problem}
of the form $u_\ell(t) Y_\ell^m(\theta)$, $\ell=0,1,\ldots$:

\begin{lemma}\label{lemma:quadratic-eq}
Fix $\alpha\in(0,1)$. Let $\lambda$ be the an eigenvalue for the $\ell$-th projection, $\ell\geq 1$. Then $\lambda$ satisfies the quadratic equation
\begin{equation}a(\ell)\lambda^2+b(\ell)\lambda +c(\ell)=0 \label{eqn:quadratic for lambda},
\end{equation}
where
\begin{align*}
a(\ell)=&-2\ell(\ell+2)+2(\ell+1)^2\cosh(2\tau)-2\cosh((2\ell+2)\tau),\\
b(\ell)=&\frac{1}{\alpha(1-\alpha)}4\ell(\ell+1)(\ell+2) [(\ell+1)\sinh(2\tau)+\sinh((2\ell+2)\tau)],\\
c(\ell)=&-\frac{1}{\alpha(1-\alpha)}8\ell^2(\ell+1)^2(\ell+2)^2   [\cosh((2\ell+2)\tau)-\cosh(2\tau)].\end{align*}
For each $\ell\geq 1$, equation \eqref{eqn:quadratic for lambda} has exactly two  solutions $\lambda_\ell^-<\lambda_\ell^+$. For each $\lambda$ that solves \eqref{eqn:quadratic for lambda}, the corresponding eigenfunctions are the form $u_\ell^{\pm}(t)Y_\ell^m(\theta)$
for
$$u_\ell^{\pm}(t):=u_\ell^1+\frac{\lambda_\ell^{\pm}  \alpha}{4\ell(\ell+2)(\ell+1)} u_\ell^2,$$
where
\begin{align*}
u_\ell^1(t)=&(\ell+2)\sinh((\ell+2)\tau)\cosh(\ell t)-\ell\sinh(\ell \tau)\cosh((\ell+2)t),\\
u_\ell^2(t)=&[\ell\sinh(\ell\tau)-(\ell+2)\sinh((\ell+2)\tau)][(\ell+2)\sinh(\ell t)-\ell\sinh((\ell+2) t)]\\
&-\ell(\ell+2)
[\cosh(\ell\tau)-\cosh((\ell+2)\tau)][\cosh(\ell t)-\cosh((\ell+2)t)].
\end{align*}
For $\ell=0$ there are two eigenvalues: $\lambda_0^-=0$ (with just constant eigenfunctions) and
$$\lambda_0^+=\frac{4}{\alpha(1-\alpha)}\frac{\sinh(2\tau)}{1-\cosh(2\tau)+\tau\sinh(2\tau)}>0,$$
with eigenfunction
$$u_0^+(t)=\sinh(2 \tau)(\sinh (2t)-2t)+(1-\cosh(2\tau))\cosh(2t)+2(1-\alpha)(1-\cosh(2\tau)+\tau\sinh(2\tau))-1+\cosh(2\tau).$$

\end{lemma}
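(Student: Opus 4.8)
The plan is to reduce the eigenvalue problem \eqref{bi-laplacian-eq}--\eqref{eigenvalue-problem} for the conformal metric $g_f=e^{2f}g_c$ to a pair of boundary conditions for a fourth-order ODE in $t\in[0,\tau]$, projected over spherical harmonics. First I would solve the interior equation $P_fu_\ell=0$: by the conformal covariance \eqref{Panetiz-boundary:cylinder-general-conformal-factor}, $P_fu_\ell=e^{-4f}P^{(\ell)}_{g_c}u_\ell$, so it suffices to solve $P^{(\ell)}u_\ell=0$ with $P^{(\ell)}=(\partial_{tt}-(\ell+2)^2)(\partial_{tt}-\ell^2)$ from \eqref{Paneitz-cylinder}. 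For $\ell\geq 1$ the general solution is spanned by $\cosh(\ell t),\sinh(\ell t),\cosh((\ell+2)t),\sinh((\ell+2)t)$; for $\ell=0$ one root is doubled and one picks up the solutions $1,t,\cosh(2t),\sinh(2t)$, which explains the anomalous form of $u_0^+$. The strategy is then to impose the four scalar conditions: $B^1_fu_\ell=0$ at $t=0$ and at $t=\tau$ (which by \eqref{Panetiz-boundary:cylinder-general-conformal-factor} are simply $\partial_tu_\ell=0$ at both endpoints, the conformal factor being nonvanishing), together with $B^{3,(\ell)}_fu_\ell=\lambda u_\ell$ at each of the two boundary components. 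Using \eqref{Panetiz-boundary:cylinder-general-conformal-factor}, at $t=0$ the third-order condition reads $e^{-3f(0)}\{\partial_{ttt}-(3\mu_\ell+3)\partial_t\}u_\ell=\lambda u_\ell$, i.e.\ with $\alpha=e^{3f(0)}$, $\{\partial_{ttt}-(3\mu_\ell+3)\partial_t\}u_\ell=\alpha\lambda u_\ell$; and similarly at $t=\tau$ with $1-\alpha$ in place of $\alpha$ by the normalization \eqref{normalization}.

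Next I would organize the linear algebra. After enforcing the two Neumann-type conditions $\partial_tu_\ell(0)=\partial_tu_\ell(\tau)=0$, the solution space drops to two dimensions; a convenient basis is $u_\ell^1,u_\ell^2$ as displayed, which one checks directly satisfies these Neumann conditions at both endpoints (here $u_\ell^1$ is the even-type combination and $u_\ell^2$ the one vanishing at $t=0$ to higher order, engineered so that $u_\ell^2(0)=0$ and the representation $u_\ell^\pm=u_\ell^1+\frac{\lambda\alpha}{4\ell(\ell+2)(\ell+1)}u_\ell^2$ makes the $t=0$ third-order condition an identity). Writing $u_\ell=A\,u_\ell^1+B\,u_\ell^2$, the two remaining conditions (the $B^3$ condition at $t=0$ and at $t=\tau$) become a homogeneous $2\times 2$ linear system in $(A,B)$ whose coefficients are linear in $\lambda$; solvability forces the $2\times 2$ determinant to vanish, which is a quadratic in $\lambda$, namely \eqref{eqn:quadratic for lambda}. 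The coefficients $a(\ell),b(\ell),c(\ell)$ then come out by expanding this determinant using the standard hyperbolic addition formulas; the $\frac{1}{\alpha(1-\alpha)}$ factors in $b$ and $c$ trace back to the two $B^3$ conditions carrying factors $\alpha$ and $1-\alpha$. That $\lambda_\ell^-<\lambda_\ell^+$ are real and distinct follows from $a(\ell)<0$, $c(\ell)<0$ (so $a c<0$ and the discriminant is positive, with the product of roots negative—consistent with $\lambda_\ell^-<0<\lambda_\ell^+$ when $c/a<0$), using $\cosh((2\ell+2)\tau)>(\ell+1)^2\cosh(2\tau)-\ell(\ell+2)$ and $\cosh((2\ell+2)\tau)>\cosh(2\tau)$ for $\tau>0$, $\ell\geq1$.

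For the $\ell=0$ case I would argue separately, since $\mu_0=0$ and the ODE $(\partial_{tt}-4)\partial_{tt}u_0=0$ has the degenerate solution set $\{1,t,\cosh 2t,\sinh 2t\}$. The Neumann conditions $\partial_tu_0(0)=\partial_tu_0(\tau)=0$ cut this down, the constant function $u_0\equiv1$ always solves everything with $\lambda_0^-=0$, and the orthogonal complement yields a single nontrivial eigenfunction; imposing the two $B^3$ conditions (now $\{\partial_{ttt}-3\partial_t\}u_0=\alpha\lambda u_0$ at $t=0$ and $=(1-\alpha)\lambda u_0$ at $t=\tau$, noting $3\mu_0+3=3$) and solving gives the explicit $\lambda_0^+$ and $u_0^+$ quoted. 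The sign $\lambda_0^+>0$ is clear once one checks $1-\cosh(2\tau)+\tau\sinh(2\tau)>0$ for $\tau>0$, which is immediate from its derivative being $2\tau\cosh(2\tau)>0$ and vanishing value at $\tau=0$.

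The main obstacle is purely computational bookkeeping: expanding the $2\times 2$ determinant with entries built from $u_\ell^1,u_\ell^2$ and their third derivatives evaluated at $0$ and $\tau$, and then collapsing the resulting combination of products of $\sinh$ and $\cosh$ of $\ell t$, $(\ell+2)t$ into the compact closed forms for $a,b,c$ via repeated use of product-to-sum identities—there is real risk of sign or index errors here, and the cleanest route is probably to verify the claimed $u_\ell^1,u_\ell^2$ satisfy the Neumann conditions and the normalized $t=0$ $B^3$-identity first, and only then impose the single remaining condition at $t=\tau$, so that the quadratic in $\lambda$ emerges from one scalar equation rather than a determinant. All the structural steps (conformal reduction, ODE solution, dimension count) are routine; the technical heart is confirming that the stated $a(\ell),b(\ell),c(\ell)$ and eigenfunctions are the correct output of that one remaining boundary condition.
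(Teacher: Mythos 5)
Your structural plan matches the paper's: solve $P^{(\ell)}u_\ell=0$ in cylindrical coordinates, impose the Neumann conditions $\partial_t u_\ell(0)=\partial_t u_\ell(\tau)=0$ to cut the solution space to $\mathrm{span}\{u_\ell^1,u_\ell^2\}$, use $u_\ell^2(0)=0$ and $\partial_{ttt}u_\ell^1(0)=0$ to turn the $B^3$-condition at $t=0$ into the relation $4B\ell(\ell+1)(\ell+2)=\lambda A\alpha$, and then feed this into the remaining $B^3$-condition at $t=\tau$ (with the normal reversed, and with $e^{-3f(\tau)}=(1-\alpha)^{-1}$ from the normalization \eqref{normalization}) to obtain the quadratic in $\lambda$. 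The $\ell=0$ degenerate case is also handled the same way. So far this is exactly the paper's argument.

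There is, however, a genuine error in your sign analysis of the quadratic. You correctly note $a(\ell)<0$ and $c(\ell)<0$, but then write ``so $ac<0$''—in fact $ac>0$. Consequently the shortcut ``discriminant $b^2-4ac>0$ because $ac<0$'' does not apply; with $ac>0$, the positivity of the discriminant is not automatic and must be checked by an explicit computation, as the paper does. Moreover your stated conclusion $\lambda_\ell^-<0<\lambda_\ell^+$ is wrong: with $a<0$, $b>0$, $c<0$, Vieta's formulas give product of roots $c/a>0$ and sum of roots $-b/a>0$, so \emph{both} roots are strictly positive, $0<\lambda_\ell^-<\lambda_\ell^+$. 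This is not a cosmetic point: Corollary~\ref{cor:annulus model} depends on all non-trivial eigenvalues being positive, and your claimed sign pattern would contradict it. To repair the argument, keep the observations $a<0$, $b>0$, $c<0$, compute the discriminant $b^2-4ac$ explicitly, verify it is strictly positive for $\alpha\in(0,1)$, $\tau>0$, and then read off from Vieta that both roots are positive.
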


The proof is just computational and it is also postponed to the Appendix.\\

If  write
$$\lambda^-_\ell=\frac{-b(\ell)}{2a(\ell)}-\sqrt{\frac{b(\ell)^2}{4a^2(\ell)}-\frac{c(\ell)}{a(\ell)}}$$
it is clear that:

\begin{corollary} \label{cor:annulus model}
All the non-trivial eigenvalues associated to the eigenvalue problem \eqref{bi-laplacian-eq}-\eqref{boundary-condition}-\eqref{eigenvalue-problem} in $(\mathcal A_\rho, g_f)$ are strictly positive. In addition, the eigenspace corresponding to the zero eigenvalue consists only of constant functions.
\end{corollary}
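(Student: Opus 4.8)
The plan is to read off positivity directly from the explicit formulas in Lemma \ref{lemma:quadratic-eq}, handling the $\ell=0$ sector and the $\ell\ge 1$ sectors separately. For $\ell=0$ there is nothing to prove beyond what Lemma \ref{lemma:quadratic-eq} already asserts: the two eigenvalues are $\lambda_0^-=0$, whose eigenspace is exactly the constants, and $\lambda_0^+>0$, the positivity of the latter being visible from the stated closed form once one checks that $1-\cosh(2\tau)+\tau\sinh(2\tau)>0$ for $\tau>0$ (this follows since the function vanishes at $\tau=0$ and has positive derivative $2\tau\cosh(2\tau)$ there). So the content of the Corollary is the claim that for every $\ell\ge 1$ both roots $\lambda_\ell^\pm$ of the quadratic \eqref{eqn:quadratic for lambda} are strictly positive, together with the observation that no nonconstant function can lie in the kernel.

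For the $\ell\ge 1$ part I would argue via the sign pattern of the coefficients $a(\ell),b(\ell),c(\ell)$ and Descartes/Vieta. First, $b(\ell)>0$ and $c(\ell)<0$ for all $\ell\ge1$ and $\tau>0$: indeed $\alpha(1-\alpha)>0$, and $\sinh((2\ell+2)\tau)>0$, $\sinh(2\tau)>0$, while $\cosh((2\ell+2)\tau)-\cosh(2\tau)>0$ since $\cosh$ is increasing on $[0,\infty)$ and $2\ell+2>2$. The key step is to determine the sign of $a(\ell)$. Writing $a(\ell)=-2\ell(\ell+2)+2(\ell+1)^2\cosh(2\tau)-2\cosh((2\ell+2)\tau)$ and using $\ell(\ell+2)=(\ell+1)^2-1$, one gets $a(\ell)=2(\ell+1)^2(\cosh(2\tau)-1)-2(\cosh((2\ell+2)\tau)-1)$. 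I expect this to be \emph{negative} for all $\tau>0$: set $\psi(\tau)=(\ell+1)^2(\cosh 2\tau-1)-(\cosh((2\ell+2)\tau)-1)$; then $\psi(0)=0$, $\psi'(0)=0$, and $\psi''(\tau)=4(\ell+1)^2\cosh 2\tau-4(\ell+1)^2\cosh((2\ell+2)\tau)=4(\ell+1)^2(\cosh 2\tau-\cosh((2\ell+2)\tau))\le 0$, with strict inequality for $\tau>0$ and $\ell\ge1$, so $\psi<0$ on $(0,\infty)$, i.e. $a(\ell)<0$. With $a(\ell)<0$, $b(\ell)>0$, $c(\ell)<0$, the product of the roots is $c(\ell)/a(\ell)>0$ and their sum is $-b(\ell)/a(\ell)>0$, so both roots are real (consistency with Lemma \ref{lemma:quadratic-eq}, which already asserts two real roots) and strictly positive. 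Hence $\lambda_\ell^-,\lambda_\ell^+>0$ for every $\ell\ge1$.

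Finally, to conclude that the zero eigenspace is exactly the constants: any eigenfunction of \eqref{bi-laplacian-eq}--\eqref{eigenvalue-problem} decomposes as a sum of pieces $u_\ell(t)Y_\ell^m(\theta)$, and the analysis above shows $\lambda=0$ occurs only in the $\ell=0$ sector, where by Lemma \ref{lemma:quadratic-eq} the corresponding solution space is spanned by the constant $u_0\equiv 1$. Thus the only eigenfunctions with eigenvalue $0$ are constants. The main obstacle in this argument is the determination of the sign of $a(\ell)$ — everything else is immediate from the formulas — and the convexity/Taylor estimate on $\psi$ above is the clean way to settle it uniformly in $\ell\ge1$ and $\tau>0$.
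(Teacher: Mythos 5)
Your proposal is essentially the paper's own argument: the paper establishes inside the proof of Lemma~\ref{lemma:quadratic-eq} that $a(\ell)<0$ (``by differentiating twice in $\tau$''), $b(\ell)>0$, $c(\ell)<0$, and that the discriminant is positive, from which it concludes $0<\lambda_\ell^-<\lambda_\ell^+$; the Corollary is then read off directly. Your sign checks and the convexity argument $\psi(0)=\psi'(0)=0$, $\psi''<0$ match this, and the Vieta reading (sum $-b/a>0$, product $c/a>0$) is the same conclusion. One small slip: the derivative of $1-\cosh(2\tau)+\tau\sinh(2\tau)$ is $2\tau\cosh(2\tau)-\sinh(2\tau)$, not $2\tau\cosh(2\tau)$, and it vanishes at $\tau=0$; you need one more differentiation (giving $4\tau\sinh(2\tau)>0$) to conclude positivity for $\tau>0$. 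This does not affect the result.
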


Unfortunately, it is not possible to characterize the spectral gap for the operator since the calculations for a general $\tau$ are too complicated (even if elementary). However, we have strong numerical evidence for the following:

\begin{conjecture}
For each $\tau>0$, the sequence $\{\lambda^-_\ell\}$ is increasing in $\ell$.
\end{conjecture}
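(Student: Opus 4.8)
The plan is to avoid manipulating the explicit roots of the quadratic \eqref{eqn:quadratic for lambda} altogether, and instead give $\lambda^-_\ell$ a variational (min-max) description for which monotonicity in $\ell$ is transparent. The key observation is that, by conformal invariance \eqref{conformal-invariance}, the energy $\mathcal E^M_g$ may be computed with respect to the cylindrical metric $g_c$ on $\mathcal A_\rho\cong[0,\tau]\times\mathbb S^3$. For $g_c$ the boundary slices $\{t=0\}$ and $\{t=\tau\}$ are totally geodesic, so $H=0$ and $\second_0=0$, and a direct computation of the Schouten tensor of $\mathbb R\times\mathbb S^3$ gives $2Jg_c-4A_{g_c}=4\,dt\otimes dt$; hence \eqref{energy1} reduces on $\mathcal U_0$ to
\begin{equation*}
\mathcal E^M_{g_c}[u]=\int_M(\Delta_{g_c}u)^2\,dv_{g_c}+4\int_M(\partial_t u)^2\,dv_{g_c}.
\end{equation*}
Decomposing $u=\sum_{\ell,m}v_{\ell,m}(t)Y_\ell^m(\theta)$ in spherical harmonics ($-\Delta_{\mathbb S^3}Y_\ell^m=\mu_\ell Y_\ell^m$, $\mu_\ell=\ell(\ell+2)$), the form $\mathcal E^M_{g_c}$ and the boundary norm $\int_\Sigma u^2\,d\sigma_h$ both diagonalize; the weights $\alpha=e^{3f(0)}$ and $1-\alpha=e^{3f(\tau)}$ enter only through the latter, because $d\sigma_h$ is computed with $h$ induced by $g_f$. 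Using $\partial_\eta u=0$ (i.e. $v'(0)=v'(\tau)=0$) and one integration by parts one obtains, for $\ell\ge 1$,
\begin{equation*}
\lambda^-_\ell=\inf\left\{\frac{\displaystyle\int_0^\tau\big[(v'')^2+(2\mu_\ell+4)(v')^2+\mu_\ell^2v^2\big]\,dt}{\alpha\,v(0)^2+(1-\alpha)\,v(\tau)^2}\ :\ v\in H^2(0,\tau),\ v'(0)=v'(\tau)=0,\ v\not\equiv 0\right\}=:\inf_v R_\ell[v].
\end{equation*}
That this infimum is attained and equals the \emph{smaller} root $\lambda^-_\ell$ of \eqref{eqn:quadratic for lambda} is because the Euler--Lagrange system of $R_\ell$ is exactly \eqref{bi-laplacian-eq}--\eqref{boundary-condition}--\eqref{eigenvalue-problem} restricted to the $\ell$-th sector, whose spectrum is $\{\lambda^-_\ell,\lambda^+_\ell\}$ by Lemma \ref{lemma:quadratic-eq}.

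\textbf{Monotonicity.} With this description the conclusion is immediate. Fix an admissible $v\not\equiv 0$. The numerator of $R_\ell[v]$ is strictly increasing in $\mu_\ell$: differentiating in $\mu$ and integrating by parts (using the boundary conditions on $v'$) gives $2\int_0^\tau(v')^2\,dt+2\mu\int_0^\tau v^2\,dt>0$ for $\mu>0$; equivalently the coefficients $2\mu_\ell+4$ and $\mu_\ell^2$ are strictly increasing in $\ell$ since $\ell\mapsto\mu_\ell$ is. The denominator does not depend on $\ell$. Hence $R_\ell[v]<R_{\ell+1}[v]$ for every $v\not\equiv 0$ and every $\ell\ge 1$. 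Taking $v=v_*$ to be a minimizer realizing $\lambda^-_{\ell+1}=R_{\ell+1}[v_*]$ (necessarily $v_*\not\equiv 0$) yields
\begin{equation*}
\lambda^-_\ell\le R_\ell[v_*]<R_{\ell+1}[v_*]=\lambda^-_{\ell+1},
\end{equation*}
and $\lambda^-_0=0<\lambda^-_1$ since the $\ell=1$ infimum is strictly positive. This gives the strict monotonicity of $\{\lambda^-_\ell\}$, uniformly in $\tau>0$ and $\alpha\in(0,1)$.

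\textbf{Main obstacle.} The only non-elementary point is the identification in the first paragraph: one must show that the sector-$\ell$ infimum of $R_\ell$ is attained by a genuine eigenfunction of \eqref{bi-laplacian-eq}--\eqref{eigenvalue-problem}, so that it equals $\lambda^-_\ell$ (and not $\lambda^+_\ell$, which is automatic once one knows it is the infimum). This requires (a) the self-adjointness of the bilinear form $\mathcal Q_g$ from \cite{Case:boundary-operators}, (b) the coercivity estimate $\mathcal E^M_{g_c}[v\,Y_\ell^m]\ge c(\ell,\tau)\big(\alpha v(0)^2+(1-\alpha)v(\tau)^2\big)$ for $\ell\ge 1$ (which follows since $\mu_\ell\ge 3$ forces $\int(v'')^2+\mu_\ell^2\int v^2$ to control $\|v\|_{H^2}^2$, and $H^2(0,\tau)$ embeds in $C^1$), so that a minimizing sequence is precompact, and (c) checking that the natural boundary conditions produced by the variation are precisely $B^3_{g_f}v=\lambda v$, the $e^{-3f}$ factors of \eqref{Panetiz-boundary:cylinder-general-conformal-factor} accounting for the weights $\alpha$, $1-\alpha$. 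Once this spectral identification is in place, no closed-form manipulation of the roots of \eqref{eqn:quadratic for lambda} is needed; this is precisely what makes the argument work uniformly in $\tau$, which was the difficulty encountered by the direct approach.
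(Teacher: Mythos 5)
There is no proof in the paper to compare against: the statement you proved is explicitly labelled a \emph{Conjecture}, preceded by the remark that ``it is not possible to characterize the spectral gap for the operator since the calculations for a general $\tau$ are too complicated (even if elementary)'' and that the authors only have ``strong numerical evidence.'' What you have done is supply a proof of a result the paper leaves open, by replacing the direct manipulation of the closed-form roots of \eqref{eqn:quadratic for lambda} (the route the authors attempted and abandoned) with a variational characterization in which the $\ell$-dependence sits entirely in the coefficients $2\mu_\ell+4$ and $\mu_\ell^2$ of the numerator.

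Your argument checks out. The computation $2Jg_c-4A_{g_c}=4\,dt\otimes dt$ on $\mathbb R\times\mathbb S^3$ is correct, the slices $\{t=\text{const}\}$ are totally geodesic so the boundary integrands in \eqref{energy1} vanish, and, after one integration by parts using $v'(0)=v'(\tau)=0$, the sector-$\ell$ energy is $\int_0^\tau\big[(v'')^2+(2\mu_\ell+4)(v')^2+\mu_\ell^2 v^2\big]\,dt$. The boundary form is $\alpha v(0)^2+(1-\alpha)v(\tau)^2$ because $d\sigma_h$ is \emph{not} conformally invariant, which is exactly where the conformal factor (and hence $\alpha$) enters. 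The Euler--Lagrange system of the constrained minimization reproduces $v''''-(2\mu_\ell+4)v''+\mu_\ell^2 v=0$ — which expands $(\partial_{tt}-(\ell+2)^2)(\partial_{tt}-\ell^2)$ — and the natural boundary conditions $v'''(0)=\lambda\alpha\,v(0)$, $v'''(\tau)=-\lambda(1-\alpha)v(\tau)$, matching \eqref{Panetiz-boundary:cylinder-general-conformal-factor}. Because the energy is strictly convex for fixed boundary data, the two-dimensional reduction to the quadratic form $Q(a,b)$ on boundary values is legitimate, its generalized eigenvalues against $B(a,b)=\alpha a^2+(1-\alpha)b^2$ are exactly $\{\lambda_\ell^-,\lambda_\ell^+\}$ from Lemma \ref{lemma:quadratic-eq}, and so the min-max identification $\lambda_\ell^-=\inf_v R_\ell[v]$ holds. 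Once this is in place the monotonicity is trivial, since $\partial_\mu$ of the numerator is $2\int(v')^2+2\mu\int v^2>0$ for $v\not\equiv 0$ and $\ell\mapsto\mu_\ell$ is strictly increasing; evaluating at the minimizer of $R_{\ell+1}$ gives strict inequality.

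Two small points worth tidying if you write this up. First, the infimum in your definition of $R_\ell$ should be taken over $v$ with $(v(0),v(\tau))\ne(0,0)$ rather than merely $v\not\equiv 0$, since otherwise the denominator can vanish; this is immaterial for the infimum but should be stated. Second, your proof in fact yields more than the conjecture: it gives a new, computation-free proof that $\lambda_\ell^-$ is real and strictly positive (the paper derives this from an explicit discriminant computation), and it shows the monotonicity is uniform in $\alpha\in(0,1)$ and $\tau>0$. The argument is the four-dimensional analogue of the standard sector-by-sector variational monotonicity for the two-dimensional Steklov annulus in Fraser--Schoen, which the paper cites as the case where the conjecture ``holds''; it would be natural to note this parallel explicitly.
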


Note that a similar computation  is performed in \cite{Fraser-Schoen:annulus} for Steklov eigenvalues in 2 dimensions and in their situation the conjecture holds. In addition, they prove that for each $\alpha$ there is a value  $\tau^*(\alpha)$ such that for $\tau\leq \tau^*(\alpha)$ the smallest non-trivial eigenvalue is
given by $ \lambda^-_1$, while for $\tau\geq \tau^*(\alpha)$  we have that the smallest non-zero eigenvalue is $ \lambda^+_0$. In the result of \cite{Fraser-Schoen:annulus} there is an $\alpha^*$ such that for $\tau^*(\alpha^*)$ that the smallest eigenvalue is
maximized and in that case $ \lambda^-_1=\lambda^+_1=\lambda^+_0.$ We prove a partial result in that direction. Set  $\beta=\alpha(1-\alpha)$.

\begin{proposition} \label{prop: first eigenvalue}
Given $\beta\in\left(0,\frac{1}{4}\right)$, there are values $\tau^-$, $\tau^*$ and  $\tau^+$ such that: for $\tau^-$ it holds $\lambda_0^+>\lambda_1^-$, for $\tau^-$ we have  $\lambda_1^->\lambda_0^+$, and for $\tau^*$   we encounter $\lambda_1^-=\lambda_0^+$ .
\end{proposition}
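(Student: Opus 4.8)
The plan is to reduce the comparison $\lambda_1^- \lessgtr \lambda_0^+$ to a single-variable monotonicity statement in $\tau$ (with $\beta = \alpha(1-\alpha)$ fixed), and then apply the intermediate value theorem. First I would write down the two eigenvalues in closed form using Lemma \ref{lemma:quadratic-eq}: from the quadratic $a(1)\lambda^2 + b(1)\lambda + c(1) = 0$ one gets
\begin{equation*}
\lambda_1^- = \frac{-b(1) - \sqrt{b(1)^2 - 4a(1)c(1)}}{2a(1)},
\end{equation*}
with $a(1), b(1), c(1)$ the explicit hyperbolic-trigonometric expressions from the lemma evaluated at $\ell = 1$ (note $b(1)$ and $c(1)$ carry the factor $\tfrac{1}{\beta}$), and
\begin{equation*}
\lambda_0^+ = \frac{4}{\beta}\cdot\frac{\sinh(2\tau)}{1 - \cosh(2\tau) + \tau\sinh(2\tau)}.
\end{equation*}
The key structural observation is that both quantities scale like $\tfrac{1}{\beta}$ times a function of $\tau$ alone — indeed $\lambda_1^-$ is $\tfrac{1}{\beta}$ times the negative root of $\beta a(1)\lambda'^2 + (\beta b(1))\lambda' + \beta c(1)$ after substituting $\lambda = \lambda'/\beta$, and $\beta b(1), \beta c(1), a(1)$ are all $\beta$-free. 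So the sign of $\lambda_1^- - \lambda_0^+$ depends only on $\tau$, and it suffices to analyze the function
\begin{equation*}
G(\tau) := \beta\big(\lambda_1^-(\tau) - \lambda_0^+(\tau)\big),
\end{equation*}
which is independent of $\beta$.

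Next I would establish the two asymptotic regimes. As $\tau \to 0^+$, Taylor-expanding the hyperbolic functions: $\lambda_0^+ \sim \tfrac{4}{\beta}\cdot\tfrac{2\tau}{(2\tau)^2/3} = \tfrac{6}{\beta\tau} \to +\infty$, while one checks from the expansions $a(1) = -2 - 2 + O(\tau^2) \cdot(\text{const}) $, more carefully $a(1) \to -2\cdot 1\cdot 3 + 2\cdot 4 - 2 = 0$ at leading order so one needs the $\tau^2$ term, and $b(1), c(1)$ also vanish to appropriate orders; the upshot to verify is that $\lambda_1^-(\tau)$ stays bounded (in fact converges to the ball value $12$ as $\tau \to 0$, consistent with Lemma \ref{lemma:eigenvalues-ball} at $\ell = 1$, once the normalization is matched) — hence $G(\tau) \to -\infty$, i.e. $\lambda_1^- < \lambda_0^+$ for small $\tau$. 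This gives the value $\tau^-$ with $\lambda_0^+ > \lambda_1^-$. For $\tau \to +\infty$, the dominant terms are the $\cosh((2\ell+2)\tau) = \cosh(4\tau)$ and $\sinh(4\tau)$ contributions in $a(1), b(1), c(1)$, so $a(1) \sim -2\cosh(4\tau) \to -\infty$, $\beta b(1) \sim 4\cdot 1\cdot 2 \cdot 3\sinh(4\tau) = 24\sinh(4\tau)$, $\beta c(1) \sim -8\cdot 4\cdot 9[\cosh 4\tau] = -288\cosh(4\tau)$; then $\lambda_1^- = \tfrac{1}{\beta}\cdot\tfrac{-\beta b(1) - \sqrt{(\beta b(1))^2 - 4a(1)\beta c(1)}}{2a(1)}$, and since $a(1) < 0$ the root behaves like $\tfrac{1}{\beta}\cdot\tfrac{-24\sinh 4\tau + \sqrt{576\sinh^2 4\tau - 8\cdot 288\cosh^2 4\tau}}{-4\cosh 4\tau}$; the discriminant is $\sim (576 - 2304)\cosh^2 4\tau < 0$ at this order, so one must retain subleading terms, but the expected conclusion is $\lambda_1^-(\tau) \to +\infty$ (growing, unboundedly), while $\lambda_0^+ \sim \tfrac{4}{\beta}\cdot\tfrac{\sinh 2\tau}{\tau\sinh 2\tau} = \tfrac{4}{\beta\tau} \to 0$. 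Hence $G(\tau) \to +\infty$, i.e. $\lambda_1^- > \lambda_0^+$ for large $\tau$; this gives $\tau^+$.

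Finally, since $G$ is continuous on $(0,\infty)$ (both $\lambda_1^-$ and $\lambda_0^+$ are continuous functions of $\tau$ — the quadratic's discriminant stays positive by the last assertion of Lemma \ref{lemma:quadratic-eq} and the denominator $1 - \cosh 2\tau + \tau\sinh 2\tau > 0$ for $\tau > 0$), the intermediate value theorem produces $\tau^*$ with $G(\tau^*) = 0$, i.e. $\lambda_1^-(\tau^*) = \lambda_0^+(\tau^*)$. I expect the main obstacle to be the $\tau \to +\infty$ asymptotics of $\lambda_1^-$: because the leading-order discriminant $(\beta b(1))^2 - 4a(1)(\beta c(1))$ is negative, the expansion of the root requires carrying the $\cosh(2\tau)$ and $(\ell+1)^2 = 4$ subleading terms in $a(1), b(1), c(1)$ and checking that the corrected discriminant is positive with the right growth rate — this is the one genuinely delicate computation. (A cleaner alternative, which I would pursue if the asymptotics get messy, is to exhibit one specific small value $\tau_1$ and one specific large value $\tau_2$ with $G(\tau_1) < 0 < G(\tau_2)$ by direct numerical/interval evaluation, which suffices for the IVT argument and avoids the asymptotic bookkeeping entirely.) Monotonicity of $G$ — hence uniqueness of $\tau^*$, and the ordering $\tau^- < \tau^* < \tau^+$ claimed implicitly — would follow if one can show $\tfrac{d}{d\tau}\lambda_1^- > 0$ and $\tfrac{d}{d\tau}\lambda_0^+ < 0$ on $(0,\infty)$; $\lambda_0^+$ is straightforward, but for $\lambda_1^-$ this is again contingent on controlling the root of the quadratic and I would treat it as an optional refinement rather than part of the core argument.
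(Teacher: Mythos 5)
Your overall strategy --- reduce the comparison $\lambda_1^- \lessgtr \lambda_0^+$ to the behavior of a single quantity in $\tau$ (with $\beta$ fixed), check the limits $\tau\to 0^+$ and $\tau\to\infty$, and invoke the intermediate value theorem --- is exactly what the paper does (the paper works with the ratio $F(\beta,\tau)=\lambda_1^-/\lambda_0^+$ rather than your $G(\tau)$, but this is immaterial). However, two of your intermediate claims are actually wrong, and one of them (your ``key structural observation'') is a genuine algebra error rather than the flagged uncertainty.

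\textbf{The $\beta$-independence claim is false.} Substituting $\lambda=\lambda'/\beta$ into $a(1)\lambda^2+b(1)\lambda+c(1)=0$ and clearing denominators gives $a(1)\lambda'^2+\beta b(1)\lambda'+\beta^2 c(1)=0$, not ``$\beta a(1)\lambda'^2+\beta b(1)\lambda'+\beta c(1)=0$''. Since $c(1)$ carries a single factor $1/\beta$, the coefficient $\beta^2 c(1)$ still contains a factor of $\beta$, so the rescaled equation is not $\beta$-free and $G(\tau)$ does depend on $\beta$. This is visible directly in the paper's closed form for $F$: the discriminant under the square root contains a term linear in $\beta$. The conclusion you wanted (that it suffices to work at a single $\beta$) does not follow. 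Fortunately it is also not needed --- you simply run the IVT at each fixed $\beta\in(0,\tfrac14)$, as the paper does.

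\textbf{Both asymptotic guesses for $\lambda_1^-$ are off, though harmlessly.} As $\tau\to 0^+$, expanding $a(1)\sim -16\tau^4$, $b(1)\sim 192\tau/\beta$, $c(1)\sim -1728\tau^2/\beta$ gives
$\lambda_1^-\sim \frac{6}{\beta\tau^3}\bigl(1-\sqrt{1-3\beta\tau^4}\bigr)\sim 9\tau\to 0$,
not the ball value $12$; the annulus degenerates rather than converging to the ball. As $\tau\to\infty$, $a(1)\sim -e^{4\tau}$, $b(1)\sim 12 e^{4\tau}/\beta$, $c(1)\sim -144 e^{4\tau}/\beta$ give
$\lambda_1^-\to \tfrac{6}{\beta}\bigl(1-\sqrt{1-4\beta}\bigr)$,
a finite positive limit (here $\beta<\tfrac14$ is exactly what makes the radicand nonnegative), not $+\infty$. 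Your worry about the leading-order discriminant being negative is a symptom of this: the $\sinh^2$ and $\cosh^2$ leading terms cancel in $b^2-4ac$, and it is the \emph{subleading} terms that control the sign; carried out correctly one gets a positive, $O(e^{8\tau})$ discriminant and the finite limit above. The qualitative conclusions you need survive: for small $\tau$, $\lambda_0^+\sim 6/(\beta\tau)\to\infty$ while $\lambda_1^-\to 0$; for large $\tau$, $\lambda_0^+\sim 4/(\beta\tau)\to 0$ while $\lambda_1^-$ is bounded below by a positive constant. So the IVT argument closes, but not for the reasons you stated.

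\textbf{On the ``cleaner alternative''.} Direct numerical evaluation at two sample $\tau$-values would not, as written, yield a proof for all $\beta\in(0,\tfrac14)$, precisely because the $\beta$-independence you invoked is false: the sign changes could in principle drift with $\beta$. You would either need uniform bounds in $\beta$ on a fixed $\tau$-interval, or simply carry out the asymptotics for general $\beta$ as the paper does (the limits above are uniform on compact $\beta$-intervals away from $0$ and $\tfrac14$, but showing this takes essentially the same work).
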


\begin{proof}
For  $\ell=1$ we have
\begin{align*}
a(1)=&-4(\cosh(2\tau)-1)^2,\\
b(1)=&\frac{48}{\alpha(1-\alpha)}  \sinh(2\tau) [1+\cosh (2\tau)], \\
c(1)=&-  \frac{288}{\alpha(1-\alpha)}  [\cosh(4\tau)-\cosh(2\tau)].
\end{align*}
Then, the associated eigenvalue is
\begin{equation*}
\begin{split}
\lambda_1^-= \frac{1}{\alpha(1-\alpha)}  \Big[&6\frac{\sinh(2\tau)(1+\cosh (2\tau))}{(\cosh (2\tau)-1)^2}\\
&-6\sqrt{\frac{\sinh^2(2\tau)(1+\cosh(2\tau))^2}{(\cosh(2\tau)-1)^4}- 2\alpha(1-\alpha)\frac{  [\cosh(4\tau)-\cosh(2\tau)]}{(\cosh(2\tau)-1)^2}}\,\Big],
\end{split}
\end{equation*}
and it has multiplicity four (this is the number of spherical harmonics in 3 dimensions associated to $\ell=1$).\\

Now we take the quotient $\lambda_1^-/\lambda_0^+$, denoted by
\begin{equation*}
\begin{split}
F(\beta, \tau):=\frac{\lambda_1^-}{\lambda_0^+}=   \frac{3}{2}\Big[&\frac{\sinh(2\tau)(1+\cosh (2\tau))}{(\cosh (2\tau)-1)^2}\\
&-\sqrt{\frac{\sinh^2(2\tau)(1+\cosh(2\tau))^2}{(\cosh(2\tau)-1)^4}- 2\beta \frac{  [\cosh(4\tau)-\cosh(2\tau)]}{(\cosh(2\tau)-1)^2}}\,\Big]\\
&\quad\cdot\left(\frac{\sinh(2\tau)}{1-\cosh(2\tau)+\tau\sinh(2\tau)}\right)^{-1}.
\end{split}
\end{equation*}
A tedious, but straightforward computation reveals that
\begin{itemize}
\item $F(\beta, \tau)\to 0 $ as $\tau\to 0$.
\item  $F(\beta, \tau)\to \infty $ as $\tau \to \infty$

\end{itemize}

By  the continuity of $F(\beta, \tau)$  we conclude that for each $\beta$ there are values of  $\tau^-$, $\tau^*$ and  $\tau^+$  for which  $ F(\beta, \tau^-)<1$,  $ F(\beta, \tau^+)>1$ and  $ F(\beta, \tau^*)=1$, which concludes the proof.
\end{proof}

\begin{remark}
Note that for each value of $\beta$ we have two values of $\alpha$. This is because the problem is symmetric.
\end{remark}

\begin{remark}
Numeric computations strongly suggest that the function $F(\beta, \tau)$ in the proof above is increasing and the value $\tau^*$ is unique.
\end{remark}

\begin{remark}
If the conjecture above holds, then the smallest eigenvalue is either $\lambda_1^-$ or $\lambda_0^+$.
\end{remark}

\section{(Boundary) eigenvalue problems} \label{sec: boundary value}

In this section we study the eigenvalue problem \eqref{bi-laplacian-eq}-\eqref{boundary-condition}-\eqref{eigenvalue-problem}, which we recall here:
\begin{equation}\label{P1}
\left\{\begin{split}
&P_g u=0    \hbox{ in } M,\\
&B^1_g u=0\hbox{ on } \Sigma,\\
&B^3_g u=\lambda u\hbox{ on } \Sigma.
\end{split}\right.
\end{equation}
for $M$ a manifold with boundary satisfying the hypothesis in Theorem \ref{thm:positivity}.
For  \eqref{P1}, one can show that there exists a non-decreasing sequence of eigenvalues $\{\lambda_0^g,\lambda_1^g,\lambda_2^g,\ldots\}$. Corollary \ref{cor:positivity} characterizes the zero-eigenspace and yields positivity of the operator if $\Sigma$ has either one or two connected components.
In addition, recall that $\lambda_1^g$ is characterized by the Rayleigh quotient \eqref{Rayleigh-quotient1}. Since the energy $\mathcal E^M_g[u]$ is conformally invariant, the  quotient
 \eqref{Rayleigh-quotient1}  remains strictly positive when conformal transformations of $M$ are applied.

In what follows we prove Corollary \ref{cor:positivity} and Theorem \ref{thm:bounds-boundary}, by considering separately the cases that, either $M$ is conformally equivalent to a half-sphere $\mathbb S^4_+$, or an annulus $\mathcal A_\rho$ by Theorem \ref{thm:positivity}. This allows to reduce the study of problem \eqref{P1} to the model cases from the previous section.

\subsection{One boundary component. Proof of Corollary \ref{cor:positivity}  and Theorem \ref{thm:bounds-boundary}}

Let $M$ be conformally equivalent to a half-sphere $\mathbb S^4_+$. By stereographic projection, we can assume that   $M=\mathbb B^4$, $\Sigma=\partial \mathbb B^4=\mathbb S^3$, with a conformal metric $g=e^{2w}|dx|^2$.

We consider first the trivial setting, namely $w\equiv 0$. Lemma \ref{lemma:eigenvalues-ball} implies that $\lambda_0^g=0$ with $\Ker (B^3)=\{csts\}$ and $\lambda_1^g>0$. Nevertheless, this result can be proved directly by a simple integration by parts argument. Indeed, take the model $M=\mathbb R^4_+$, $\Sigma=\mathbb R^3$, with coordinates $(x_1,x_2,x_3,y)$, $y>0$, $x_1,x_2,x_3\in\mathbb R$. In this particular case we have
\begin{equation*}P=\Delta^2,\quad B^1=-\partial_y,\quad B^3=-\partial_y \Delta.
\end{equation*}
Let $\psi$ be any smooth solution to \eqref{P1}.
Integrating by parts we explicitly see that
\begin{equation*}
\lambda \int_\Sigma \psi^2 \, dx= \int_\Sigma \psi B^3 \psi\,dx=\int_M (\Delta \psi)^2\,dxdy\geq 0,
\end{equation*}
and it is zero if and only if $\Delta \psi=0$. Since we also have that  $\partial_y u=0
$ at $\Sigma$, we conclude that $\psi$ is constant up to the boundary, as desired.

We also remark that existence such solution $\psi$ can be proved by a standard minimization argument, while the uniqueness follows from the previous proof (since the constant obtained above would be 0).\\

For the case of a general manifold $M$ in Corollary \ref{cor:positivity} we recall again that the energy $\mathcal E_g^M[u]$ in  \eqref{Rayleigh-quotient1} is conformally invariant. In particular,
if $\tilde u$ is a minimizer of  \eqref{Rayleigh-quotient1} with respect to the metric  $g=e^{2w}|dx|^2$, we would have that
$$\lambda_1^g=\frac{\mathcal E_g^M[\tilde u]}{\int_\Sigma \tilde u^2 \,d\sigma_h}\geq \lambda_1^0\, \frac{\int_\Sigma \tilde u^2\, dv_0}{\int_\Sigma \tilde u^2 \,\sigma_h}>0 ,$$
where $\lambda^g_1$ is the first non-zero eigenvalue with respect to the metric $g$, $\lambda_1^0 $ is the first non-zero eigenvalue with respect to the flat metric and $dv_0, dv_g$ are volume elements with respect to the flat metric in the ball and the metric given by $g$, respectively.

To prove Theorem \ref{thm:bounds-boundary} we proceed as in the proof of Theorem \ref{thm:closed}. We use the unit ball model and assume that there exists a conformal embedding $\Phi:M\to \mathbb B^4$ satisfying  $\Phi(\Sigma)=\partial \mathbb B^4=\mathbb S^3$. In the two-dimensional case, the bound for the Steklov eigenvalue is obtained by using the coordinate functions of the embedding as test functions in the Rayleigh quotient \eqref{Rayleigh-quotient1}. In our setting we will use instead the (four) eigenfunctions for the first non-zero eigenvalue in the ball model and calculated in \eqref{first-eigenfunction-ball} for $\ell=1$. The precise expression is
$$U_m(r,\theta)=u_1(-\log r)Y_1^m(\theta),\quad m=1,2,3,4.$$
Next, on $M$ we set $\overline U_m=U_m \circ \Phi$.

Denote by $\Phi_m$, $m=1,2,3,4$ be the coordinate functions of the embedding. By a calibration argument as in \eqref{calibration}
we can assume that
\begin{equation}\label{calibration1}
\int_\Sigma \Phi_m\,d\sigma_h=0.
\end{equation}
Indeed, after choosing  a M\"obius transform of the boundary $\mathbb S^3$ in order to calibrate the center of mass \eqref{calibration1}, one takes its unique extension to a conformal transformation of $\mathbb B^4$ (which is known as the Poincar\'e extension, see \cite[Section 4.4]{Ratcliffe}).

In addition, $B_g^1 \overline U_m=0$ thanks to the covariance property \eqref{covariance} and the construction of $u_1$. Finally, noting that we have $u_1(0)=1$, this implies that   $\overline U_m=\Phi_m$ along $\Sigma$. We conclude  that $\overline U_m$ is an admissible test function in the Rayleigh quotient \eqref{Rayleigh-quotient1}.

We thus calculate
\begin{equation}\label{test-function}
 \lambda^g_1 \int_\Sigma \overline U_m^2\,d\sigma_h\leq  \mathcal E^M_g[\overline U_m]=  \mathcal E^{\Phi(M)}_{ g_{\mathbb{B}^4}}[U_m].
 \end{equation}
Adding on $m=1,2,3,4$ and recalling that $\sum_{m=1}^4 (Y_1^m)^2(\theta) =1$  we have
$$ \lambda^g_1 \vol(\Sigma)\leq \sum_m \mathcal E^{\Phi(M)}_{ g_{\mathbb{B}^4}}[U_m].$$
Next, since $U_m$ is an eigenfunction in the ball model,
$$\mathcal E^{\mathbb B^4}_{ g_{\mathbb{B}^4}}[U_m]=\lambda_1^0 \int_{\mathbb S^3} U_m^2\,d\sigma_{g_{\mathbb S^3}}.$$
Adding on $m$, taking into account that $\lambda_1^0=12$, we conclude
$$ \lambda^g_1 \vol(\Sigma)\leq  12\vol(\mathbb S^3)=24\pi^2.$$

\subsection{Two boundary components. Proof of Corollary \ref{cor:positivity}  and Theorem \ref{thm:bounds-boundary}}

 In the light of Theorem \ref{thm:positivity}, it is enough to take $M$ to be conformally equivalent to the annulus $\mathcal A_\rho=\{\rho\leq |x|\leq 1\}$ in $\mathbb R^4$.  Denote by $\Sigma_1$, $\Sigma_\rho$ the boundaries of $\Sigma$ corresponding to  $|x|=1$, $|x|=\rho$, respectively. In cylindrical variables, this annulus is conformally equivalent to  $[0,\tau]\times\mathbb S^3$ with the metric $g_c$. Such $\tau$ plays the role of the conformal modulus, in analogy to the two-dimensional case.

 First, Corollary \ref{cor:positivity} follows from Lemma \ref{lemma:quadratic-eq} and Corollary \ref{cor:annulus model}.

For the eigenvalue bound, we  follow the same idea as in the proof of Theorem 4.1 in \cite{Fraser-Schoen:annulus}. Let  $\tilde A_\rho$ be another annulus with the flat metric, having  boundaries $\tilde \Sigma_1$ and $\tilde \Sigma_\rho$ with the same boundary volume as $\Sigma_1$, $\Sigma_\rho$, respectively. Without loss of generality, we may rescale the metric by constant to achieve that the total boundary volume equals $\vol(\mathbb{S}^3)$ (this corresponds with the normalization \eqref{normalization}).

In the  $\tilde A_\rho$ model, we consider  the eigenfunction $u_0^+(t)$, given in Lemma \ref{lemma:quadratic-eq}, with eigenvalue $\lambda_0^+>0$, which only depends on $\tau$ and $\alpha$ (recall \eqref{normalization} and the definition of $\alpha$).

We pull back this function to the original manifold, setting $U_0(r)=u_0^+(-\log r)$ and $\overline U_0=U_0 \circ \Phi$. Note that $\overline U_0$ is a constant function at each of the two boundary components   $\Sigma_1$, $\Sigma_\rho$. Let us check that  $\overline U_0$ is a suitable test function in the Rayleigh quotient \eqref{Rayleigh-quotient1},  Calculate (with a slight abuse of notation)
\begin{equation*}
\begin{split}
\int_\Sigma \overline U_0\,d\sigma_h &=  \int_\Sigma  U_0\circ\Phi\,d\sigma_h=u_0^+(0)\vol(\Sigma_1)+u_0^+(\tau)\vol(\Sigma_\rho)\\
&=u_0^+(0)\vol(\tilde \Sigma_1)+u_0^+(\tau)\vol(\tilde \Sigma_\rho)=\int_{\tilde\Sigma} U_0\,dx=0.
\end{split}
\end{equation*}
Next, using $\bar U_0$ as a test function,
\begin{equation}\label{test-function-annulus}
 \lambda^g_1 \int_\Sigma \overline U_0^2\,d\sigma_h\leq  \mathcal E^M_g[\overline U_0]=  \mathcal E^{\Phi(M)}_{ g_{\text{flat}}}[U_0].
 \end{equation}
Now, on the one hand,
$$\int_\Sigma \overline U_0^2\,d\sigma_h=(u_0^+)^2(0)\vol(\Sigma_1)+(u_0^+)^2(\tau)\vol(\Sigma_\rho)$$
while, on the other hand,
$$E^{\Phi(M)}_{ g_{\text{flat}}}[U_0]=\lambda_0^+ \int_{\tilde\Sigma} U_0^2\,dx=\lambda_0^+\Big((u_0^+)^2(0)\vol(\Sigma_1)+(u_0^+)^2(\tau)\vol(\Sigma_\rho)\Big),$$
which yields statement \eqref{statement-annulus}.

Finally, Proposition  \ref{prop: first eigenvalue} implies that for $\tau\geq \tau^*$ (or equivalently, $\rho\leq \rho^*$) the bound is sharp.


\section{Appendix}

\begin{proof}[Proof of Lemma \ref{lemma:periodic}]
Use the formula \eqref{Paneitz-cylinder} for the Paneitz operador in cylindrical coordinates after spherical harmonic decomposition. We look for  $\varrho$-periodic solutions for the  constant coefficient ODE
\begin{equation}
\partial_{tttt}u_\ell-[(2+\ell)^2+\ell^2]\partial_{tt}u_\ell+(2+\ell)^2\ell^2u_\ell=\lambda_\ell u_\ell.
\end{equation}
For this, we  need to find purely imaginary roots of its characteristic polynomial
$$m^4-[(2+\ell)^2+\ell^2]m^2+(2+\ell)^2\ell^2-\lambda_\ell=0,$$
which must satisfy
\begin{equation*}
    m^2=2+2\ell+\ell^2-\sqrt{4+8\ell+4\ell^2+\lambda}=:-a^2\le 0.
\end{equation*}
Imposing periodicity ($a\varrho=2\pi$),
we arrive at the desired result.\\
\end{proof}

\begin{proof}[Proof of Lemma \ref{lemma:eigenvalues-ball}]
This is a straightforward calculation that we detail below.
 Taking into account that the Paneitz operator $P$ is conformally invariant, in the interior of the ball equation \eqref{bi-laplacian-eq} reduces to
$$\left(\partial_{tt}-\left(2+\ell\right)^2\right)\left( \partial_{tt}-\ell^2\right)u_\ell=0$$
In addition we require
\begin{equation}\label{boundary-condition0}
u'_\ell(0)=0
\end{equation}
(which corresponds to the condition \eqref{boundary-condition} $B^1(u_\ell)=0$), and $u$ is finite at infinity. Let $v_\ell=(\partial_{tt}-\ell^2)u_\ell$, then
$$\left(\partial_{tt}-\left(2+\ell\right)^2\right)v_\ell=0$$
so
$$v_\ell(t)=A e^{-(\ell+2)t}.$$
Hence, for $\ell\geq 1$,
$$u_\ell(t)=Ce^{-\ell t}+\frac{A}{4\ell+4}e^{-(\ell+2)t}.$$
Imposing the boundary condition \eqref{boundary-condition0} we have
$$u_\ell(t)=-\frac{A(\ell+2)}{4\ell(\ell+1)}e^{-\ell t}+\frac{A}{4\ell+4}e^{-(\ell+2)t}.$$
The eigenvalues of $B^3$  are given by $\left.\frac{B^3u_\ell}{u_\ell}\right|_{t=0}$, this is, recalling \eqref{Panetiz-boundary:cylinder-general-conformal-factor},
$$\lambda_\ell=(\ell+2)\frac{\ell^2-(\ell+2)^2}{-(\ell+2)+1}=4(\ell+2),\quad \text{for }\quad \ell\geq 1.$$

For $\ell=0$ the calculation is slightly different, since
$$u_0=C+\frac{A}{4}e^{-2t}.$$
Imposing the boundary condition implies that $A=0$, this is, $u_0$ is constant and $\lambda_0=0$, as expected.

\end{proof}

\begin{proof}[Proof of Lemma \ref{lemma:quadratic-eq}]
Fix $\ell\geq 1$. Using \eqref{Paneitz-cylinder} and the conformal property \eqref{conformal-Paneitz} we observe that the general solution $u_\ell$
can be written as
$$u_\ell(t)=A \sinh (\ell t)+B\cosh(\ell t)+C \sinh ((\ell+2)t)+D \cosh ((\ell+2)t).$$
Imposing that $B_1 u_\ell=0$ at the boundary of $\mathcal A_\rho$, it is easy to see that solutions are spanned by two functions that can be chosen as
\begin{align*}
u_\ell^1(t)=&(\ell+2)\sinh((\ell+2)\tau)\cosh(\ell t)-\ell\sinh(\ell \tau)\cosh((\ell+2)t)\\
u_\ell^2(t)=&[\ell\sinh(\ell\tau)-(\ell+2)\sinh((\ell+2)\tau)][(\ell+2)\sinh(\ell t)-\ell\sinh((\ell+2) t)]\\
&-\ell(\ell+2)
[\cosh(\ell\tau)-\cosh((\ell+2)\tau)][\cosh(\ell t)-\cosh((\ell+2)t)].
\end{align*}
Then, eigenfunctions can be written as $Au_\ell^1+B u_\ell^2$.\\

We can directly compute
\begin{align*}
\partial_{ttt} u_\ell^1(t)=&(\ell+2)\ell^3\sinh((\ell+2)\tau)\sinh(\ell t)-\ell(\ell+2)^3\sinh(\ell\tau)\sinh((\ell+2)t)\\
\partial_{ttt} u_\ell^2(t)=&\ell(\ell+2)[\ell\sinh(\ell\tau)-(\ell+2)\sinh((\ell+2)\tau)][\ell^2\cosh(\ell t)-(\ell+2)^2\cosh((\ell+2) t)]\\
&-\ell(\ell+2)
[\cosh(\ell\tau)-\cosh((\ell+2)\tau)][\ell^3\sinh(\ell t)-(\ell+2)^3\sinh((\ell+2)t)].
\end{align*}

The eigenfunction condition at $t=0$ can be written from \eqref{Panetiz-boundary:cylinder} as
$$e^{-3f(0)}(A \partial_{ttt} u_\ell^1(0)+B \partial_{ttt} u_\ell^2(0))=\lambda (Au_\ell^1(0)+B u_\ell^2(0)),$$
which implies
\begin{equation*}
\begin{split}
& e^{-3f(0)}B([\ell\sinh(\ell \tau)-(\ell+2)\sinh((\ell+2)\tau)][(\ell+2)\ell^3-\ell(\ell+2)^3]\\
&\qquad=\lambda A((\ell+2)\sinh((\ell+2)\tau)-\ell\sinh(\ell \tau)),
\end{split}
\end{equation*}
or equivalently,
\begin{equation}4B\ell(\ell+2)(\ell+1)=\lambda A e^{3f(0)}.\label{relation A and B}\end{equation}

For the eigenvalue condition at $\tau$ we need to take into account that the outward normal is reversed, so we have
$$-e^{-3f(\tau)}(A \partial_{ttt}u_\ell^1 (\tau)+B \partial_{ttt}u_\ell^2 (\tau))=\lambda(Au_\ell^1(\tau)+B u_\ell^2(\tau)).$$
Multiplying by $\lambda$ and using \eqref{relation A and B} we obtain
\begin{multline*}
- e^{-3f(\tau)}( 4\ell(\ell+2)(\ell+1) e^{-3f(0)} \partial_{ttt}u_\ell^1 (\tau)+ \lambda \partial_{ttt}u_\ell^2 (\tau))=\lambda 4\ell(\ell+2)(\ell+1) e^{-3f(0)} u_\ell^1(\tau)+\lambda^2 u_\ell^2(\tau)\end{multline*}
Or equivalently
\begin{equation} a(\ell)\lambda^2+ b(\ell)\lambda+c(\ell)=0 \label{eqn:quadratic for lambda app},\end{equation}
where
\begin{align*}
a(\ell)=& u_\ell^2(\tau),\\
b(\ell)=&4\ell(\ell+1)(\ell+2) e^{-3f(0)} u_\ell^1(\tau)+ e^{-3f(\tau)}\partial_{ttt}u_\ell^2 (\tau),\\
c(\ell)=&4\ell(\ell+1)(\ell+2)e^{-3(f(\tau)+f(0))}  \partial_{ttt}u_\ell^1 (\tau).
\end{align*}
From the previous computations we have, at $t=\tau$,
\begin{align*}
u_\ell^1(\tau)
=&(\ell+1)\sinh(2\tau)+\sinh((2\ell+2)\tau),\\
u_\ell^2(\tau)
=&-2\ell(\ell+2)+2(\ell+1)^2\cosh(2\tau)-2\cosh((2\ell+2)\tau),
\end{align*}
and for the derivatives
\begin{align*}
\partial_{ttt}u_\ell^1(\tau)
=&-2\ell(\ell+1)(\ell+2)[\cosh((2\ell+2)\tau)-\cosh(2\tau)],\\
\partial_{ttt}u_\ell^2(\tau)
=&4\ell(\ell+1)(\ell+2)[(\ell+1)\sinh(2\tau)+\sinh((2\ell+2)\tau)].
\end{align*}
To conclude,  recall our normalization \eqref{normalization} and set $\alpha=e^{3f(0)}$, for $\alpha\in(0,1)$, so $e^{-3f(\tau)}=(1-\alpha)^{-1}$.

Let us have a closer look at these eigenvalues now. By differentiating twice in $\tau$, we can easily check that $a(\ell)<0$ for $\tau>0$. In addition, $b(\ell)>0$ and $c(\ell)<0$. After some calculation, one can explicitly see that
the discriminant associated to \eqref{eqn:quadratic for lambda app} is strictly positive for $\alpha \in(0,1)$, hence there are two positive real roots, $0<\lambda_\ell^-<\lambda_\ell^+$.\\

Now we compute the eigenfunction for $\ell=0$. In that case we that the particular solutions are
\begin{align*}
&u^1_0(t)=\sinh(2 \tau)(\sinh (2t)-2t)+(1-\cosh(2\tau))\cosh(2t),
\\ &u^2_0(t)=1,
\end{align*}
so the general solution can be written as $Au^1_0+B u^2_0$. The eigenvalue condition at $t=0$ is equivalent to
\begin{equation}\label{eq10}8e^{-3 f(0)}A\sinh(2\tau)  =\lambda(A(1-\cosh(2\tau))+B).\end{equation}
On the other hand, at $t=\tau$
we have
\begin{equation*}
\begin{split}
 &-8e^{-3 f(\tau)}A\big\{\sinh(2 \tau)\cosh (2\tau)+(1-\cosh(2\tau))\sinh(2\tau)\big\}
 \\ &\quad=\lambda\big\{A\sinh(2 \tau)(\sinh (2\tau)-2\tau)+A(1-\cosh(2\tau))\cosh(2\tau)+B\big\}.
 \end{split}
 \end{equation*}
Subtracting both equations, we obtain a simple non-trivial eigenvalue:
%
$$\lambda_0^+=\frac{4}{\alpha(1-\alpha)}\frac{\sinh(2\tau)}{1-\cosh(2\tau)+\tau\sinh(2\tau)}.$$
Substituting in \eqref{eq10} we obtain
\begin{equation*}
B=\left\{2(1-\alpha)(1-\cosh(2\tau)+\tau\sinh(2\tau))-1+\cosh(2\tau)\right\}A,
\end{equation*}
and this finishes the proof of  Lemma \ref{lemma:quadratic-eq}.
\end{proof}

\noindent\textbf{Acknowledgements:} M.d.M. Gonz\'alez is supported by the Spanish government grant and MTM2017-85757-P and the Severo Ochoa program at ICMAT.
M. S\'aez is supported by the grant Fondecyt Regular 1190388.

\end{document}